\newtheorem{theorem}{Theorem}[section]
\newtheorem{corollary}[theorem]{Corollary}
\newtheorem{lemma}[theorem]{Lemma}
\newtheorem{remark}[theorem]{Remark}
\begin{document}
\title{Two nonlocal inverse curvature flows of convex closed plane curves}

\author{Zezhen Sun \footnote{School of Mathematical Sciences, East China Normal University, Shanghai 200241, China. E-mail address: \texttt{52205500017@stu.ecnu.edu.cn}}
}

\maketitle
\begin{abstract}
  \noindent
  In this paper we introduce two $1/\kappa^{n}$-type ($n\ge1$) curvature flows for closed convex planar curves. Along the flows the length of the curve is decreasing while the enclosed area is increasing. And finally, the evolving curves converge smoothly to a finite circle if they do not develop singularity during the evolution process.
  \\
  \\
  {\bf Keywords:}Closed convex plane curves, Inverse curvature flow, Existence, Convergence
  \\
  {\bf Mathematics Subject Classification:} 35B40,35K15,35K55,
\end{abstract}

\section{Introduction}
The curvature flow of plane curves, arising in many application fields, such as phase transitions, image processing and smoothing, etc., has received a lot of attention during the past
several decades. In general, the evolution equation has the form
\begin{equation}\label{fl}
  \left\{\begin{aligned}
    \frac{\partial X(u, t)}{\partial t}&=f(\kappa(u,t))N_{in}(u,t),\\
    X(u, 0)&=X_{0}(u),\quad u\in S^{1},
  \end{aligned}
  \right.
  \end{equation}
where $X_{0}(u)\subset\mathbb{R}^{2}$ is a given smooth closed curve, parameterized by $u\in S^{1}$, and $X(u, t):S^{1}\times[0,T)\rightarrow\mathbb{R}^{2}$ is a family of curves moving along its inward normal direction $
N_{in}(u,t)$ with given speed function $f(\kappa(u,t))$, which is a strictly increasing (parabolicity) function
of the curvature $\kappa(u,t)$ of $X(u, t)$.

When $f(\kappa(u,t))=\kappa(u,t)$  is the famous curve shortening flow, which has been
studied intensively by a great number of authors for various conditions on the initial curve $X_{0}$. One can see the book \cite{Chou-Zhu} for literature. In particular, we mention the papers \cite{Gage83,Gage85,Gage-Hamilton,Grayson,Grayson87}. Another class of interesting curvature flow is the so-called nonlocal curvature flow, the evolution equation of which takes the form
\begin{equation}\label{fl}
  \left\{\begin{aligned}
    \frac{\partial X(u, t)}{\partial t}&=F(\kappa(u,t)-\lambda(t))N_{in}(u,t),\\
    X(u, 0)&=X_{0}(u),\quad u\in S^{1},
  \end{aligned}
  \right.
  \end{equation}
where $X_{0}$ is a convex simple closed curve,
$F(\kappa(u,t))$ is a given function of the curvature satisfying the parabolic condition $F^{'}(z)>0$
for all $z$ in its domain, and $\lambda(t)$ is a function of time which depends on certain global
(nonlocal) quantities of $X (\cdot, t)$, say its length $L(t)$, enclosed area $A(t)$, or other possible
global quantities like the integral of curvature over the entire curve in certain ways. In \cite{Tsai-Wang},
Tsai-Wang  considered the following $\kappa^{n}$-type nonlocal flows
\begin{align}
F(\kappa(u,t)-\lambda(t))&=\kappa^{n}-\frac{1}{2\pi}\int_{X (\cdot, t)}\kappa^{n+1}ds,\quad n>0\quad(LP),\notag\\
F(\kappa(u,t)-\lambda(t))&=\kappa^{n}-\frac{1}{L(t)}\int_{X (\cdot, t)}\kappa^{n}ds,\quad n>0\quad(AP),\notag\\
\label{ldai1}F(\kappa(u,t)-\lambda(t))&=\kappa^{n}-\frac{2A(t)}{L^{2}(t)}\int_{X (\cdot, t)}\kappa^{n+1}ds,\quad n\ge1\quad(LD\,and\,AI),\\
\label{ldai2}F(\kappa(u,t)-\lambda(t))&=\kappa^{n}-\frac{L(t)}{4\pi A(t)}\int_{X (\cdot, t)}\kappa^{n+1}ds,\quad n\ge1\quad(LD\,and\,AI).
\end{align}
Here $s$ is the arc length parameter of $X (\cdot, t)$ and the constant $n>0$ is arbitrary. The
abbreviation $LP (AP,LD\, and\, AI)$ indicates that the flow is length-preserving (area-preserving,length-decreasing and area-increasing). Later, Gao-Pan-Tsai studied the following  $1/\kappa^{n}$-type flows (see \cite{gpt1,gpt2})
\begin{align}
\label{lp}F(\kappa(u,t)-\lambda(t))&=\frac{1}{2\pi}\int_{X (\cdot, t)}\kappa^{1-n}ds-\frac{1}{\kappa^{n}},\quad n>0\quad(LP),\\
\label{ap}F(\kappa(u,t)-\lambda(t))&=\frac{1}{L(t)}\int_{X (\cdot, t)}\kappa^{-n}ds-\frac{1}{\kappa^{n}},\quad n>0\quad(AP).
\end{align}
Inspired by their works, we now investigate two new $1/\kappa^{n}$-type nonlocal curvature flows for convex curves with the speed function $F(\kappa)-\lambda(t)$ given by
\begin{equation}\label{flow1}
F(\kappa)-\lambda(t)=\frac{L(t)}{2L^{2}(t)-4\pi A(t)}\int_{X (\cdot, t)}\kappa^{-n}ds-\frac{1}{\kappa^{n}},\quad n\ge1,
\end{equation}
or
\begin{equation}\label{flow2}
F(\kappa)-\lambda(t)=\frac{L^{2}(t)-2\pi A(t)}{\pi L^{2}(t)}\int_{X (\cdot, t)}\kappa^{1-n}ds-\frac{1}{\kappa^{n}},\quad n\ge1.
\end{equation}
Both flows have the common feature that the length $L(t)$ is decreasing and the area $A(t)$ is increasing.

Since the initial curve $X_{0}$ has positive curvature $\kappa_{0}(u)$ for all $u\in S^{1}$ and the curvature term $F(\kappa(u,t))=-\kappa^{-n}$ in the speed function of \eqref{fl} is strictly increasing on its domain $\kappa\in(0,\infty)$, the flow \eqref{flow1}(or \eqref{flow2}) is parabolic. Similar to the discussions
in \cite{Mao-Pan-Wang,Jiang-Pan} (using Leray-Schauder's fixed point theory), or in \cite{GZ} (using the linearization method), or in many other nonlocal flow papers, there is a unique smooth convex solution of \eqref{flow1}(or \eqref{flow2}) defined on $S^{1}\times[0,T)$ for short time $T>0$. Therefore, we have short-time existence of a convex solution to \eqref{flow1}(or \eqref{flow2}).

The purpose of this paper is to study the asymptotic behavior of the flow \eqref{flow1}(or \eqref{flow2}). Similar to what happens in the $1/\kappa^{n}$-type length-preserving flow \eqref{lp} and $1/\kappa^{n}$-type area-preserving flow \eqref{ap}, singularity (curvature blow-up to $\infty$) may occur under the flow \eqref{flow1}(or \eqref{flow2}) for some initial convex closed curves. Thus, the optimal result we can obtain is the following,which is the main result in this paper.
\begin{theorem}\label{them}
Assume $n\ge1$ and $X_{0}(u)$ is a smooth convex closed curve.Consider the flow \eqref{flow1}(or \eqref{flow2}) and assume that the curvature $\kappa$ will not blow up to $\infty$ in any finite time during the evolution process. Then the flow exists for all time $t\in[0,\infty)$.  The length $L(t)$ is decreasing while the
enclosed area $A(t)$ is increasing. When $t\to\infty$, the evolving curve converges to a finite circle in $C^{\infty}$ topology.
\end{theorem}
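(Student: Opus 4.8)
The plan is to use the standard framework for convex curve flows, working with the support function $h(\theta,t)$ parametrized by the tangent angle $\theta$ rather than the arc length. Under this gauge the curvature satisfies $\kappa = 1/(h+h_{\theta\theta})$, the evolution of the support function becomes a quasilinear parabolic equation, and the nonlocal terms become integrals $\int_0^{2\pi}(\cdot)\,d\theta$. First I would derive the evolution equations for the basic geometric quantities: the length $L(t)=\int_0^{2\pi} h\,d\theta$, the area $A(t)=\tfrac12\int_0^{2\pi} h(h+h_{\theta\theta})\,d\theta$, and the curvature $\kappa$. Differentiating $L$ and $A$ along the flow and plugging in the specific speed functions \eqref{flow1} and \eqref{flow2}, I expect the nonlocal coefficients to have been chosen precisely so that $L'(t)\le 0$ and $A'(t)\ge 0$; verifying these two monotonicities — and identifying when equality holds (only at circles) — is the first substantive step. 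The key analytic tools here will be the Cauchy–Schwarz inequality and the isoperimetric inequality $L^2\ge 4\pi A$, together with the Hölder-type inequalities needed to control $\int\kappa^{-n}ds$ against $L$ and $A$.

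The second block of steps establishes a priori bounds. From $L'\le 0$ I get $L(t)\le L(0)$, and from $A'\ge 0$ together with the isoperimetric inequality I get a lower bound $A(t)\ge A(0)>0$; combined these confine the isoperimetric ratio $L^2/(4\pi A)$ to a bounded range $[1,C]$. The hypothesis of the theorem rules out curvature blow-up ($\kappa\to\infty$), so $\kappa$ stays finite; I must then also obtain a uniform positive lower bound on $\kappa$ (equivalently an upper bound on $1/\kappa$, i.e.\ on the radii of curvature $h+h_{\theta\theta}$). This containment — showing the curve neither flattens out nor degenerates — is where I expect the real work to lie: I would try to bound $\max(h+h_{\theta\theta})$ using a maximum-principle argument applied to the evolution equation for $\kappa$ (or for $1/\kappa$), exploiting the sign of the nonlocal term and the already-established bounds on $L$ and $A$. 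With upper and lower bounds on the curvature in hand, and $L,A$ bounded, I obtain uniform $C^0$ and $C^1$ control of $h$, and standard parabolic $L^p$ and Schauder estimates bootstrap this to uniform $C^\infty$ bounds on compact time intervals. Long-time existence on $[0,\infty)$ then follows from a continuation argument: the solution can only fail to extend if the curvature blows up, which is excluded by hypothesis.

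The final steps address the asymptotic convergence to a circle. The monotone bounded quantities $L(t)$ and $A(t)$ converge as $t\to\infty$, so their derivatives must tend to zero along a subsequence; since the vanishing of $L'$ (or $A'$) forces the equality case in the Cauchy–Schwarz/isoperimetric inequalities used in the first step, this drives the curve toward having constant curvature, i.e.\ toward a circle. To upgrade subsequential convergence to full $C^\infty$ convergence, I would show that the isoperimetric deficit $L^2-4\pi A$ (or an equivalent functional measuring deviation from a circle) decays to zero — ideally establishing a differential inequality of the form $\frac{d}{dt}(L^2-4\pi A)\le -c\,(L^2-4\pi A)$ or at least monotone decay — and then invoke the uniform higher-order estimates to pass from $C^0$ convergence of the support function to convergence in every $C^k$ norm. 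The limiting radius is finite and positive because $L$ and $A$ have positive finite limits.

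The main obstacle, as flagged above, is the a priori curvature containment: the theorem only assumes the absence of blow-up to $+\infty$, so I must independently rule out the curvature decaying to zero (the curve becoming locally flat) and must convert the global monotonicity of $L,A$ into pointwise control of $h+h_{\theta\theta}$. This requires careful use of the maximum principle on the (nonlocal) parabolic equation for the curvature, where the nonlocal term's sign and the bounds on the isoperimetric ratio must combine favorably; handling the two flows \eqref{flow1} and \eqref{flow2} may require slightly different estimates since their nonlocal coefficients differ.
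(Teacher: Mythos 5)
Your outline reproduces the paper's overall architecture (monotonicity of $L$ and $A$, a priori curvature pinching, long-time existence by parabolic regularity, decay of the isoperimetric deficit, smooth convergence), but the two load-bearing steps cannot be carried out with the tools you name. First, the monotonicity: for flow \eqref{flow1} the area part is elementary, since $A'(t)=\frac{L^{2}-4\pi A}{2L^{2}-4\pi A}\int_{0}^{2\pi}\kappa^{-n-1}\,d\theta\ge 0$ follows from $L^{2}\ge 4\pi A$; but $L'(t)\le 0$ is \emph{exactly} the statement $\int_{0}^{2\pi}\kappa^{-n}\,d\theta\le\frac{\pi L}{L^{2}-2\pi A}\int_{0}^{2\pi}\kappa^{-n-1}\,d\theta$, and symmetrically for \eqref{flow2} the length part is elementary while $A'(t)\ge0$ reduces to the same inequality. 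This is the Lin--Tsai inequality \eqref{LT}, an Andrews/Green--Osher-type geometric inequality valid precisely for $n\ge1$ (it is the only place the hypothesis $n\ge1$ enters); it is \emph{not} a consequence of Cauchy--Schwarz, H\"older, or the isoperimetric inequality, and it is also the engine behind the exponential differential inequality $\frac{d}{dt}(L^{2}-4\pi A)\le -c\,(L^{2}-4\pi A)$ you hope to establish. Without invoking or proving it, both your first step and your deficit-decay step stall.

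Second, the step you yourself flag as "the real work" --- the uniform lower bound on $\kappa$ --- would fail by the route you propose, because the sign of the nonlocal term at the relevant extremum is unfavorable. Writing $\nu=\kappa^{-n}$, the equation is \eqref{nt}, $\nu_{t}=n\nu^{p}(\nu_{\theta\theta}+\nu-\lambda(t))$, and a maximum-principle bound on $\max\nu$ would need $\nu-\lambda\le0$ at the spatial maximum; but for \eqref{flow1} one checks $\lambda(t)\le\frac{L^{2}}{2(L^{2}-2\pi A)}\max\nu\le\max\nu$, so the reaction term is nonnegative there and the comparison is vacuous (and \eqref{flow2} offers no better sign). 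The paper instead proves a gradient estimate for $\Phi=\nu^{2}+\nu_{\theta}^{2}$ (Lemma \ref{est}), localizes it (Lemma \ref{new}), and derives a contradiction with the already-established bound $L(t)\le L(0)$: if $\nu$ blew up at a point, the gradient bound would force $\nu$ to be large on an interval of fixed angular width, making $L=\int_{0}^{2\pi}\nu^{1/n}\,d\theta$ exceed $L(0)$ (Lemma \ref{lok}). Finally, your conclusion is incomplete even granting smooth convergence of the curvature to a constant: that controls the \emph{shape} but not the \emph{position} of the curve, which could a priori drift or oscillate indefinitely. The paper closes this with Lemma \ref{xyz}: an energy identity for $\int_{0}^{2\pi}\nu_{\theta}^{2}\,d\theta$ combined with the Wirtinger inequality yields exponential decay of this quantity, hence $\lvert X_{t}\rvert\le c_{10}e^{-c_{11}t}$ and convergence of $X(\cdot,t)$ to a fixed limit circle; some such argument is indispensable for the stated conclusion that the evolving curve converges to a \emph{finite} circle.
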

\section{The proof of Theorem \ref{them}}
\subsection{Some basic evolution equations}
Before we start to prove the Theorem \eqref{them}, we list some basic evolution equations. It is well-known that if $X(\cdot, t):S^{1}\times[0,T)\rightarrow\mathbb{R}^{2}$ is a family of evolving simple closed curves, its length $L(t)$ and enclosed area $A(t)$ satisfy the following equations(see\cite{Chou-Zhu,Sun})
\begin{equation}\label{LA}
\frac{dL(t)}{dt}=-\int_{X(\cdot,t)}\langle V,\kappa N_{in}\rangle ds,\quad
\frac{dA(t)}{dt}=-\int_{X(\cdot,t)}\langle V,N_{in}\rangle ds,
\end{equation}
where $V=\frac{\partial X}{\partial t}$ is the velocity vector of $X$ and $\langle,\rangle$ is the inner product in $\mathbb{R}^{2}$.

\begin{lemma}\label{llaa}
(The monotonicity of length and area)Assume $X(u, t):S^{1}\times[0,T)\rightarrow\mathbb{R}^{2}$ is a smooth convex solution of the flow \eqref{flow1}(or \eqref{flow2}). Then we have
\begin{equation}
\frac{dL(t)}{dt}\le0,
\quad\frac{dA(t)}{dt}\ge0,\quad\forall t\in[0,T).
\end{equation}
\end{lemma}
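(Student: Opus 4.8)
The plan is to reduce both monotonicity statements, for both flows, to a single geometric inequality. First I would use \eqref{LA} with the velocity $V=(F(\kappa)-\lambda(t))N_{in}$, so that $\frac{dL}{dt}=-\int_{X}\kappa(F(\kappa)-\lambda)\,ds$ and $\frac{dA}{dt}=-\int_{X}(F(\kappa)-\lambda)\,ds$. Substituting the explicit speeds of \eqref{flow1} and \eqref{flow2} and using the two elementary facts $\int_{X}\kappa\,ds=2\pi$ (total curvature of a convex closed curve) and $\int_{X}ds=L$, a direct computation gives, for \eqref{flow1},
\[
\frac{dA}{dt}=\frac{L^{2}-4\pi A}{2L^{2}-4\pi A}\int_{X}\kappa^{-n}\,ds,\qquad
\frac{dL}{dt}=\int_{X}\kappa^{1-n}\,ds-\frac{2\pi L}{2L^{2}-4\pi A}\int_{X}\kappa^{-n}\,ds,
\]
and, for \eqref{flow2},
\[
\frac{dL}{dt}=-\frac{L^{2}-4\pi A}{L^{2}}\int_{X}\kappa^{1-n}\,ds,\qquad
\frac{dA}{dt}=\int_{X}\kappa^{-n}\,ds-\frac{L^{2}-2\pi A}{\pi L}\int_{X}\kappa^{1-n}\,ds.
\]
In each flow one of the two signs is immediate: since $\kappa^{-n},\kappa^{1-n}>0$ and $2L^{2}-4\pi A=L^{2}+(L^{2}-4\pi A)>0$, the isoperimetric inequality $L^{2}\ge 4\pi A$ forces $\frac{dA}{dt}\ge 0$ for \eqref{flow1} and $\frac{dL}{dt}\le 0$ for \eqref{flow2}. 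The two remaining inequalities are each equivalent, after clearing the positive denominators, to the single \emph{crux inequality}
\[
(L^{2}-2\pi A)\int_{X}\kappa^{1-n}\,ds\le \pi L\int_{X}\kappa^{-n}\,ds .
\]

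To attack the crux inequality I would pass to the tangent-angle parametrization, writing $\rho=1/\kappa$ for the radius of curvature and $P_{m}=\int_{0}^{2\pi}\rho^{m}\,d\theta$; since $ds=\rho\,d\theta$ we have $L=P_{1}$, $\int_{X}\kappa^{1-n}\,ds=P_{n}$ and $\int_{X}\kappa^{-n}\,ds=P_{n+1}$, so the crux inequality becomes $\frac{P_{n+1}}{P_{n}}\ge\frac{L^{2}-2\pi A}{\pi L}$. The key structural observation is that the moment ratios are monotone: Cauchy--Schwarz gives $P_{m}^{2}\le P_{m-1}P_{m+1}$ for all real $m$, i.e. $m\mapsto\log P_{m}$ is convex, so $P_{m+1}/P_{m}$ is non-decreasing in $m$; in particular $\frac{P_{n+1}}{P_{n}}\ge\frac{P_{2}}{P_{1}}=\frac{1}{L}\int_{0}^{2\pi}\rho^{2}\,d\theta$ for every $n\ge 1$. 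It therefore suffices to establish the base case $n=1$, namely the purely geometric inequality $\pi\int_{0}^{2\pi}\rho^{2}\,d\theta\ge L^{2}-2\pi A$.

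The base case I would prove with the support function $h(\theta)=a_{0}+\sum_{k\ge 1}(a_{k}\cos k\theta+b_{k}\sin k\theta)$, using $\rho=h+h''$ together with $L=2\pi a_{0}$, $A=\pi a_{0}^{2}-\frac{\pi}{2}\sum_{k\ge 2}(k^{2}-1)(a_{k}^{2}+b_{k}^{2})$, and Parseval's identity $\int_{0}^{2\pi}\rho^{2}\,d\theta=2\pi a_{0}^{2}+\pi\sum_{k\ge 2}(k^{2}-1)^{2}(a_{k}^{2}+b_{k}^{2})$. Substituting yields
\[
\pi\int_{0}^{2\pi}\rho^{2}\,d\theta-(L^{2}-2\pi A)=\pi^{2}\sum_{k\ge 2}(k^{2}-1)(k^{2}-2)(a_{k}^{2}+b_{k}^{2})\ge 0 ,
\]
because $(k^{2}-1)(k^{2}-2)\ge 0$ for all $k\ge 2$. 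Combined with the moment-ratio monotonicity this proves the crux inequality for every $n\ge 1$, and hence both monotonicity statements. Equality throughout forces $a_{k}=b_{k}=0$ for $k\ge 2$, i.e. the curve is a circle, consistent with circles being the equilibria of the flows.

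The routine parts are the evolution computation and the isoperimetric half of each statement; the real content is the crux inequality. The main obstacle I anticipate is recognizing that the two genuinely nontrivial monotonicities (length for \eqref{flow1}, area for \eqref{flow2}) collapse to one inequality, and that this inequality is strictly stronger than the Chebyshev bound $LP_{n}\le 2\pi P_{n+1}$ that already suffices for the length- and area-\emph{preserving} flows \eqref{lp}--\eqref{ap}. The extra slack needed beyond Chebyshev is supplied exactly by the isoperimetric deficit, which is why the base-case computation produces the favorable factor $(k^{2}-1)(k^{2}-2)$ rather than the merely nonnegative $(k^{2}-1)$.
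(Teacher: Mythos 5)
Your proposal is correct, and its skeleton coincides with the paper's up to the decisive step: your derivative formulas agree with \eqref{la1} and \eqref{la2} (indeed your simplification $\frac{dL}{dt}=-\frac{L^{2}-4\pi A}{L^{2}}\int_{X}\kappa^{1-n}ds$ for \eqref{flow2} silently corrects a typo in the first line of \eqref{la2}, whose coefficient should read $\frac{2L^{2}-4\pi A}{L^{2}}$), and you dispatch the two easy signs by the isoperimetric inequality exactly as the paper does. Where you genuinely diverge is at your crux inequality $(L^{2}-2\pi A)\int_{X}\kappa^{1-n}ds\le\pi L\int_{X}\kappa^{-n}ds$: rewritten in the normal-angle parametrization via $ds=\kappa^{-1}d\theta$, this is precisely the Lin--Tsai inequality \eqref{LT}, which the paper invokes as a black box from \cite{Lin-Tsai2012}, whereas you supply a proof. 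Your two-step argument is sound: Cauchy--Schwarz in the form $P_{(m_{1}+m_{2})/2}^{2}\le P_{m_{1}}P_{m_{2}}$ for the moments $P_{m}=\int_{0}^{2\pi}\rho^{m}d\theta$ gives log-convexity of $m\mapsto\log P_{m}$, hence $P_{n+1}/P_{n}\ge P_{2}/P_{1}$ for all real $n\ge1$ (for integer $n$, telescoping $P_{m}^{2}\le P_{m-1}P_{m+1}$ already suffices), and the base case $\pi\int_{0}^{2\pi}\rho^{2}d\theta\ge L^{2}-2\pi A$ is the Green--Osher inequality, whose Fourier-series verification I have checked ($L=2\pi a_{0}$, your area formula, Parseval, and the factor $(k^{2}-1)(k^{2}-2)\ge0$ for $k\ge2$ are all correct, the $k=1$ modes dropping out of $\rho=h+h''$). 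The trade-off is clear: the paper's citation is shorter (and note that the H\"older inequality \eqref{hold} it lists among the ingredients is not actually needed for this lemma---only the isoperimetric inequality and \eqref{LT} are used), while your route is self-contained, pinpoints exactly where the hypothesis $n\ge1$ enters (the moment-ratio reduction to $n=1$), yields the equality case (circles, matching the equilibria of both flows), and substantiates your closing remark that the crux is strictly stronger than the Chebyshev bound $LP_{n}\le2\pi P_{n+1}$ sufficient for the preserving flows \eqref{lp}--\eqref{ap}, the extra room being exactly the isoperimetric deficit.
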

\begin{proof}
For the flow \eqref{flow1}, we have
\begin{equation}\label{la1}
  \left\{\begin{aligned}
    \frac{dL(t)}{dt}&=-\int_{X(\cdot,t)}\bigg(\frac{L(t)}{2L^{2}(t)-4\pi A(t)}\int_{X (\cdot, t)}\frac{1}{\kappa^{n}}ds-\frac{1}{\kappa^{n}}\bigg)\kappa ds\notag\\
    &=-\frac{\pi L(t)}{L^{2}(t)-2\pi A(t)}\int_{X(\cdot,t)}\frac{1}{\kappa^{n}}ds+\int_{X(\cdot,t)}\frac{1}{\kappa^{n-1}}ds\\
    &=-\frac{\pi L(t)}{L^{2}(t)-2\pi A(t)}\int^{2\pi}_{0}\frac{1}{\kappa^{n+1}(\theta,t)}d\theta+\int^{2\pi}_{0}\frac{1}{\kappa^{n}(\theta,t)}d\theta,\\
    \frac{dA(t)}{dt}&=-\int_{X(\cdot,t)}\bigg(\frac{L(t)}{2L^{2}(t)-4\pi A(t)}\int_{X (\cdot, t)}\frac{1}{\kappa^{n}}ds-\frac{1}{\kappa^{n}}\bigg)ds\notag\\
    &=-\frac{L^{2}(t)}{2L^{2}(t)-4\pi A(t)}\int_{X(\cdot,t)}\frac{1}{\kappa^{n}}ds+\int_{X(\cdot,t)}\frac{1}{\kappa^{n}}ds\\
    &=-\frac{L^{2}(t)}{2L^{2}(t)-4\pi A(t)}\int^{2\pi}_{0}\frac{1}{\kappa^{n+1}(\theta,t)}d\theta+\int^{2\pi}_{0}\frac{1}{\kappa^{n+1}(\theta,t)}d\theta.
  \end{aligned}
  \right.
  \end{equation}
Similarly, for the flow \eqref{flow2}, we have
\begin{equation}\label{la2}
  \left\{\begin{aligned}
    \frac{dL(t)}{dt}&=-\frac{2L^{2}(t)-2\pi A(t)}{L^{2}}\int^{2\pi}_{0}\frac{1}{\kappa^{n}(\theta,t)}d\theta+\int^{2\pi}_{0}\frac{1}{\kappa^{n}(\theta,t)}d\theta,\\
    \frac{dA(t)}{dt}&=-\frac{L^{2}(t)-2\pi A(t)}{\pi L(t)}\int^{2\pi}_{0}\frac{1}{\kappa^{n}(\theta,t)}d\theta+\int^{2\pi}_{0}\frac{1}{\kappa^{n+1}(\theta,t)}d\theta.
  \end{aligned}
  \right.
\end{equation}
By combining the the classical isoperimetric inequality $L^{2}(t)\ge4\pi A(t)$ , the Lin-Tsai' s inequality (see \cite{Lin-Tsai2012})
\begin{equation}\label{LT}
\int^{2\pi}_{0}\frac{1}{\kappa^{n}(\theta,t)}d\theta\le\frac{\pi L(t)}{L^{2}(t)-2\pi A(t)}\int^{2\pi}_{0}\frac{1}{\kappa^{n+1}(\theta,t)}d\theta,\quad\forall n\ge1,
\end{equation}
and the following H$\ddot{o}$lder inequality
\begin{equation}\label{hold}
L(t)=\int^{2\pi}_{0}\frac{1}{\kappa(\theta,t)}d\theta\le\bigg(\int^{2\pi}_{0}\frac{1}{\kappa^{n+1}(\theta,t)}d\theta\bigg)
^{\frac{1}{n+1}}\bigg(\int^{2\pi}_{0}d\theta\bigg)^{\frac{n}{n+1}}.
\end{equation}
we can complete the proof.
\end{proof}

As a consequence of Lemma \eqref{llaa}, we can easily get
\begin{lemma}\label{lajie}
Assume $X(u, t):S^{1}\times[0,T)\rightarrow\mathbb{R}^{2}$ is a smooth convex solution of the flow \eqref{flow1}(or \eqref{flow2}). Then
\begin{equation}\label{la}
\sqrt{4\pi A(0)}\le L(t)\le L(0)\quad and \quad A(0)\le A(t)\le\frac{L^{2}(0)}{4\pi},\quad\forall t\in[0,T),
\end{equation}
and the isoperimetric ratio $\frac{L^{2}(t)}{4\pi A(t)}$ is decreasing during the evolution process unless the initial curve $X_{0}$ is a circle (which is an equilibrium solution of the flow \eqref{flow1} or \eqref{flow2}).
\end{lemma}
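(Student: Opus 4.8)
The plan is to deduce the lemma entirely from the two monotonicities $L'(t)\le 0$ and $A'(t)\ge 0$ established in Lemma \ref{llaa}, together with the classical isoperimetric inequality $L^{2}(t)\ge 4\pi A(t)$. First I would establish the four bounds in \eqref{la}. Since $L$ is nonincreasing and $A$ is nondecreasing in $t$, integrating in time gives $L(t)\le L(0)$ and $A(t)\ge A(0)$ at once. The two remaining estimates then follow by coupling these facts with the isoperimetric inequality: from $4\pi A(t)\le L^{2}(t)\le L^{2}(0)$ I get $A(t)\le L^{2}(0)/(4\pi)$, and from $L^{2}(t)\ge 4\pi A(t)\ge 4\pi A(0)$ I get $L(t)\ge\sqrt{4\pi A(0)}$. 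This part is routine.

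Next I would turn to the isoperimetric ratio $I(t):=L^{2}(t)/(4\pi A(t))$. A direct differentiation yields
\begin{equation}
\frac{dI}{dt}=\frac{2LL'\cdot 4\pi A-L^{2}\cdot 4\pi A'}{(4\pi A)^{2}}=\frac{L\bigl(2AL'-LA'\bigr)}{4\pi A^{2}}.
\end{equation}
Because $A,L>0$ while $L'\le 0$ and $A'\ge 0$, the bracket $2AL'-LA'$ is a sum of two nonpositive terms, so $dI/dt\le 0$ and $I$ is nonincreasing along the flow.

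For the strict monotonicity and the equilibrium assertion I would examine the equality case, and the key observation is that the isoperimetric defect $L^{2}-4\pi A$ can be isolated as an explicit factor. Simplifying the area line of \eqref{la1} gives, for the flow \eqref{flow1},
\begin{equation}
A'(t)=\frac{L^{2}(t)-4\pi A(t)}{2\bigl(L^{2}(t)-2\pi A(t)\bigr)}\int^{2\pi}_{0}\frac{1}{\kappa^{n+1}(\theta,t)}\,d\theta,
\end{equation}
while a parallel simplification of the length equation in \eqref{la2} gives, for the flow \eqref{flow2},
\begin{equation}
L'(t)=\frac{4\pi A(t)-L^{2}(t)}{L^{2}(t)}\int^{2\pi}_{0}\frac{1}{\kappa^{n}(\theta,t)}\,d\theta.
\end{equation}
Here $L^{2}-2\pi A>0$ and the integrals are strictly positive, so at any time at which the curve is not a circle the isoperimetric inequality is strict, $L^{2}(t)>4\pi A(t)$, and hence $A'(t)>0$ for \eqref{flow1} (respectively $L'(t)<0$ for \eqref{flow2}). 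Combined with $L'\le 0$ (respectively $A'\ge 0$) this makes $2AL'-LA'<0$, so $dI/dt<0$ strictly whenever the evolving curve is not a circle. To see that a circle is genuinely an equilibrium, I would substitute $\kappa\equiv 1/r$, $L=2\pi r$ and $A=\pi r^{2}$ directly into the speed functions \eqref{flow1} and \eqref{flow2} and check that $F(\kappa)-\lambda(t)\equiv 0$, so a round initial curve does not move and $I\equiv 1$.

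The only mildly delicate point is this rigidity step. Its convenient feature is that, once $A'$ and $L'$ are rewritten with $L^{2}-4\pi A$ pulled out as above, the whole question collapses to the equality case of the classical isoperimetric inequality, so I would not need to invoke the equality case of the Lin--Tsai inequality \eqref{LT} at all.
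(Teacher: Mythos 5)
Your proof is correct, and it is more complete than what the paper offers: the paper states this lemma with no argument at all (``As a consequence of Lemma \ref{llaa}, we can easily get''), so for the four bounds in \eqref{la} you follow the only route available --- time-monotonicity of $L$ and $A$ combined with $L^{2}(t)\ge 4\pi A(t)$ --- and there is nothing to compare. Where you genuinely diverge from (and improve on) the paper is the strict-monotonicity claim for the ratio $I(t)=L^{2}(t)/(4\pi A(t))$. Your factorizations check out: collecting the two terms in the area line of \eqref{la1} indeed gives
\begin{equation*}
A'(t)=\Bigl(1-\frac{L^{2}}{2L^{2}-4\pi A}\Bigr)\int^{2\pi}_{0}\frac{d\theta}{\kappa^{n+1}}
=\frac{L^{2}-4\pi A}{2\bigl(L^{2}-2\pi A\bigr)}\int^{2\pi}_{0}\frac{d\theta}{\kappa^{n+1}},
\end{equation*}
and the analogous collection for the flow \eqref{flow2} gives $L'(t)=\frac{4\pi A-L^{2}}{L^{2}}\int^{2\pi}_{0}\kappa^{-n}\,d\theta$; note that the coefficient $\frac{2L^{2}-2\pi A}{L^{2}}$ displayed in \eqref{la2} must be read as $\frac{2(L^{2}-2\pi A)}{L^{2}}$ (a typo in the paper --- your formula is the one that follows from the flow definition, using $\int\kappa\,ds=2\pi$). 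With the isoperimetric defect isolated as a factor, rigidity of the classical isoperimetric inequality yields $A'(t)>0$ for \eqref{flow1} (resp.\ $L'(t)<0$ for \eqref{flow2}) whenever the curve is not a circle, the sign of the other derivative comes from Lemma \ref{llaa} (so Lin--Tsai \eqref{LT} is still used, but only as a black box, never its equality case), and your identity $I'=L(2AL'-LA')/(4\pi A^{2})$ closes the argument; the equilibrium check for circles is also correct, since $\lambda(t)=r^{n}=\kappa^{-n}$ in both flows when $\kappa\equiv 1/r$. One caveat for a final write-up: what you actually prove is $I'(t)<0$ at every time at which $X(\cdot,t)$ is not a circle; to state the conclusion as ``decreasing unless $X_{0}$ is a circle'' one should also rule out a non-circular curve becoming exactly circular at a finite time (e.g.\ by backward uniqueness for the parabolic equation \eqref{nt}) --- a point the paper likewise leaves unaddressed, so your treatment is no less rigorous than the original.
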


Let $X(u,t)$ be a convex solution of the flow \eqref{flow1}(or \eqref{flow2}) on $S^{1}\times[0,T)$. By convexity, we can use the the outward normal angle $\theta\in[0,2\pi]$ to express the curvature $\kappa$ and other geometric quantities.The evolution equation of $\kappa$ is given by
\begin{equation}\label{kt}
  \left\{\begin{aligned}
    \frac{\partial\kappa}{\partial t}(\theta,t)&=\kappa^{2}(\theta,t)\bigg(\big(-\frac{1}{\kappa^{n}(\theta,t)}\big)_{\theta\theta}
+\lambda(t)-\frac{1}{\kappa^{n}(\theta,t)}\big)\bigg),\\
    \kappa(\theta,0)&=\kappa_{0}(\theta)>0,\quad(\theta,t)\in S^{1}\times[0,T),
  \end{aligned}
  \right.
  \end{equation}
  where $\kappa_{0}(\theta)>0$ is the curvature of $X_{0}$ and
  \begin{equation*}
  \lambda(t)=\frac{L(t)}{2L^{2}(t)-4\pi A(t)}\int_{X (\cdot, t)}\kappa^{-n}ds=\frac{L(t)}{2L^{2}(t)-4\pi A(t)}
  \int^{2\pi}_{0}\frac{1}{\kappa^{n+1}(\theta,t)}d\theta
  \end{equation*}
or
  \begin{equation*}
  \lambda(t)=\frac{L^{2}(t)-2\pi A(t)}{\pi L^{2}(t)}\int_{X (\cdot, t)}\kappa^{1-n}ds=
  \frac{L^{2}(t)-2\pi A(t)}{\pi L^{2}(t)}\int^{2\pi}_{0}\frac{1}{\kappa^{n}(\theta,t)}d\theta.
  \end{equation*}
 Since the radius of curvature $\rho(\theta,t)=\frac{1}{\kappa(\theta,t)}$, we can get the following evolution equation
\begin{equation}\label{rt}
\frac{\rho(\theta,t)}{\partial t}=(\rho^{n})_{\theta\theta}(\theta,t)+\rho^{n}(\theta,t)-
\lambda(t),\quad(\theta,t)\in S^{1}\times[0,T)
\end{equation}
with $\rho(\theta,0)=\rho_{0}(\theta)=\frac{1}{\kappa_{0}(\theta)}>0$. To obtain a better-looking evolution equation, we let $\nu(\theta,t)=\rho^{n}(\theta,t)$ and get
\begin{equation}\label{nt}
\frac{\partial\nu}{\partial t}=n\nu^{p}(\nu_{\theta\theta}+\nu-\lambda(t)),\quad p=1-\frac{1}{n},
\end{equation}
where
\begin{equation}
\lambda(t)=\frac{L(t)}{2L^{2}(t)-4\pi A(t)}\int^{2\pi}_{0}\rho^{n+1}(\theta,t)d\theta=
\frac{L(t)}{2L^{2}(t)-4\pi A(t)}\int^{2\pi}_{0}\nu^{1+\frac{1}{n}}(\theta,t)d\theta,
\end{equation}
or
\begin{equation}
\lambda(t)=\frac{L^{2}(t)-2\pi A(t)}{\pi L^{2}(t)}\int^{2\pi}_{0}\rho^{n}(\theta,t)d\theta=
\frac{L^{2}(t)-2\pi A(t)}{\pi L^{2}(t)}\int^{2\pi}_{0}\nu(\theta,t)d\theta.
\end{equation}

The asymptotic behavior of the flow solution $X(\cdot,t)$ is determined by the asymptotic behavior of the scalar solution \eqref{kt}, or \eqref{rt},or \eqref{nt}.

\begin{remark}
For convenience, we can write $\frac{dL(t)}{dt}$ and $\frac{dA(t)}{dt}$ in the following form
\begin{equation}\label{lan}
\frac{dL(t)}{dt}=\int^{2\pi}_{0}\nu d\theta-2\pi\lambda(t)\quad and\quad
\frac{dA(t)}{dt}=\int^{2\pi}_{0}\nu^{1+\frac{1}{n}} d\theta-L\lambda(t).
\end{equation}
\end{remark}
\subsection{Uniform convexity of the evolving curve}
In this subsection, We shall use several lemmas to deduce the lower bound of curvature $\kappa$.
\begin{lemma}\label{est}
Under the flow \eqref{flow1}(or \eqref{flow2}) with $n\ge1$, there holds the estimate
\begin{equation}
\max\limits_{S^{1}\times[0,t]}\Phi\le\max\bigg\{\max\limits_{S^{1}\times[0,t]}\nu^{2},\max\limits_{S^{1}\times\{0\}}\Phi\bigg\},
\end{equation}
where $\nu=\kappa^{-n}$ and $\Phi=\nu^{2}+\nu^{2}_{\theta}$.
\end{lemma}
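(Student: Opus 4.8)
The plan is to prove this curvature-gradient bound by applying the parabolic maximum principle to the auxiliary function $\Phi=\nu^{2}+\nu_{\theta}^{2}$, using \eqref{nt} as the governing equation. Since $S^{1}\times[0,t]$ is compact, $\Phi$ attains its maximum at some point $(\theta_{0},t_{0})$, and the goal is to show that unless this maximum is taken at $t_{0}=0$ it must be controlled by $\max\nu^{2}$.

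First I would differentiate $\Phi$ in time, using $\nu_{t}=n\nu^{p}(\nu_{\theta\theta}+\nu-\lambda)$ together with its spatial derivative
$$\nu_{\theta t}=np\nu^{p-1}\nu_{\theta}(\nu_{\theta\theta}+\nu-\lambda)+n\nu^{p}(\nu_{\theta\theta\theta}+\nu_{\theta}).$$
Combining $\Phi_{t}=2\nu\nu_{t}+2\nu_{\theta}\nu_{\theta t}$ with $n\nu^{p}\Phi_{\theta\theta}=n\nu^{p}\big(2\nu_{\theta}^{2}+2\nu\nu_{\theta\theta}+2\nu_{\theta\theta}^{2}+2\nu_{\theta}\nu_{\theta\theta\theta}\big)$, I would check that the third-order terms in $\nu_{\theta}\nu_{\theta\theta\theta}$ cancel, yielding the parabolic identity
$$\Phi_{t}=n\nu^{p}\Phi_{\theta\theta}+2n\nu^{p+2}-2n\nu^{p+1}\lambda+2np\nu^{p-1}\nu_{\theta}^{2}\nu_{\theta\theta}+2np\nu^{p}\nu_{\theta}^{2}-2np\nu^{p-1}\nu_{\theta}^{2}\lambda-2n\nu^{p}\nu_{\theta\theta}^{2}.$$

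At the maximum $(\theta_{0},t_{0})$ the spatial gradient vanishes, $\Phi_{\theta}=2\nu_{\theta}(\nu+\nu_{\theta\theta})=0$, which forces the dichotomy $\nu_{\theta}=0$ or $\nu_{\theta\theta}=-\nu$. In the first case $\Phi=\nu^{2}$ at $(\theta_{0},t_{0})$, so $\max_{S^{1}\times[0,t]}\Phi\le\max_{S^{1}\times[0,t]}\nu^{2}$ and we are finished. The decisive case is $\nu_{\theta\theta}=-\nu$: substituting this into the identity, the pair $2n\nu^{p+2}$ and $-2n\nu^{p}\nu_{\theta\theta}^{2}=-2n\nu^{p+2}$ cancel, as do $2np\nu^{p-1}\nu_{\theta}^{2}\nu_{\theta\theta}=-2np\nu^{p}\nu_{\theta}^{2}$ and $+2np\nu^{p}\nu_{\theta}^{2}$, leaving the clean expression
$$\Phi_{t}-n\nu^{p}\Phi_{\theta\theta}=-2n\lambda\nu^{p-1}\big(\nu^{2}+p\nu_{\theta}^{2}\big).$$

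The crux is then a sign argument. If the maximum occurs at a time $t_{0}>0$, then $\Phi_{t}\ge0$ and $\Phi_{\theta\theta}\le0$, so the left-hand side is nonnegative; but the right-hand side is strictly negative, because $\lambda(t)>0$ (both prefactors are positive since $L^{2}\ge4\pi A$ by the isoperimetric inequality and the integrands are positive), $\nu>0$, and $p=1-\tfrac1n\ge0$ precisely when $n\ge1$, so that $\nu^{2}+p\nu_{\theta}^{2}>0$. This contradiction forces the maximum to be attained either at $t_{0}=0$ or at a point where $\nu_{\theta}=0$, which is exactly the stated bound. The main obstacle I anticipate is the computational bookkeeping needed to produce the evolution identity accurately so that the cancellations in the $\nu_{\theta\theta}=-\nu$ case emerge, coupled with confirming that the hypothesis $n\ge1$ is what makes $\nu^{2}+p\nu_{\theta}^{2}$ nonnegative and hence makes the positivity of $\lambda$ decisive.
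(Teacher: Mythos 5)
Your proposal is correct and follows essentially the same route as the paper: the same maximum-principle argument on $\Phi=\nu^{2}+\nu_{\theta}^{2}$, the same dichotomy $\nu_{\theta}=0$ or $\nu_{\theta\theta}=-\nu$ from $\Phi_{\theta}=0$, and the same sign contradiction via $\Phi_{t}-n\nu^{p}\Phi_{\theta\theta}=-2n\lambda\nu^{p-1}(\nu^{2}+p\nu_{\theta}^{2})<0$, using $\lambda(t)>0$ and $p=1-\tfrac{1}{n}\ge0$ for $n\ge1$. Your version is in fact slightly more complete than the paper's, since you display the full evolution identity (which the paper only asserts as ``a direct calculation,'' with a typo $n\nu^{n}$ in place of $n\nu^{p}$) and you verify the positivity of $\lambda$ explicitly.
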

\begin{proof}
 Fix a $t>0$ and suppose that at $(\theta_{0},t_{0})\in S^{1}\times[0,t]$ we have $\Phi(\theta_{0},t_{0})=\max_{S^{1}\times[0,t]}\Phi$
(where $t_{0}>0$, otherwise we are done). Then we claim that $\nu_{\theta}(\theta_{0},t_{0})=0$ and thus the conclusion is proven. Indeed, since at the maximum of $\Phi$ we have $\nu_{\theta}(\nu_{\theta\theta}+\nu)=0$, if $\nu_{\theta}(\theta_{0},t_{0})\ne0$, then we have $\nu_{\theta\theta}+\nu=0$ at $(\theta_{0},t_{0})$. Using that, a direct calculation shows that at $(\theta_{0},t_{0})$, we have
\begin{equation*}
\frac{\partial\Phi}{\partial t}=n\nu^{n}\Phi_{\theta\theta}-2np\lambda(t)\nu^{p-1}(\nu_{\theta})^{2}-2n\lambda(t)\nu^{p+1}<0,
\end{equation*}
which contradicts with the fact that $\frac{\partial\Phi}{\partial t}(\theta_{0},t_{0})\ge0$. This completes the proof.
\end{proof}

\begin{lemma}\label{new}
Under the flow \eqref{flow1} (or \eqref{flow2}) with $n\ge1$. Assume that for some $(\theta_{0},t_{0})\in S^{1}\times(0,T)$,
\begin{equation*}
\nu(\theta_{0},t_{0})=\max\limits_{S^{1}\times[0,t_{0}]}\nu(\theta,t).
\end{equation*}
Then for any small $\epsilon>0$, there exists a number $\eta>0$ depending only on $\epsilon$, such that for all $\theta\in(\theta_{0}-\eta,\theta_{0}+\eta)$
\begin{equation*}
(1-\epsilon)\nu(\theta_{0},t_{0})\le\nu(\theta,t_{0})+\epsilon\sqrt{c},
\end{equation*}
where $c$ is the constant only depending on the initial curve.
\end{lemma}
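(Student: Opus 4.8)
The plan is to turn the space--time gradient bound of Lemma \ref{est} into a spatial modulus-of-continuity estimate by integrating in $\theta$ outward from the maximum point. Write $M=\nu(\theta_{0},t_{0})$ and set $c=\max_{S^{1}\times\{0\}}\Phi$, which depends only on $X_{0}$ and is strictly positive because $\Phi\ge\nu^{2}>0$. Applying Lemma \ref{est} at time $t_{0}$ and using the hypothesis $\nu(\theta_{0},t_{0})=\max_{S^{1}\times[0,t_{0}]}\nu$, so that $\max_{S^{1}\times[0,t_{0}]}\nu^{2}=M^{2}$, I would obtain the pointwise bound
\[
\Phi(\theta,t_{0})=\nu^{2}(\theta,t_{0})+\nu_{\theta}^{2}(\theta,t_{0})\le\max\{M^{2},c\}\qquad\text{for all }\theta .
\]
Since $\theta_{0}$ is in particular a spatial maximum of $\nu(\cdot,t_{0})$, one also has $\nu_{\theta}(\theta_{0},t_{0})=0$. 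These are the only two inputs, and the argument then splits according to which term dominates the right-hand side.

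In the first case $M^{2}\ge c$ I would use $\nu_{\theta}^{2}\le M^{2}-\nu^{2}$, so that $\theta\mapsto\arcsin\!\big(\nu(\theta,t_{0})/M\big)$ is $1$-Lipschitz, its derivative $\nu_{\theta}/\sqrt{M^{2}-\nu^{2}}$ having modulus at most $1$. As $\nu(\theta_{0},t_{0})/M=1$, integrating from $\theta_{0}$ yields the comparison $\nu(\theta,t_{0})\ge M\cos|\theta-\theta_{0}|$ for $|\theta-\theta_{0}|<\pi/2$; restricting to $|\theta-\theta_{0}|<\eta:=\epsilon$ then gives $\nu(\theta,t_{0})\ge M(1-\epsilon^{2}/2)\ge(1-\epsilon)M$. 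In the second case $M^{2}<c$ I would instead use the crude bound $|\nu_{\theta}|\le\sqrt{\Phi}\le\sqrt{c}$ and integrate from $\theta_{0}$ to get $\nu(\theta,t_{0})\ge M-\sqrt{c}\,|\theta-\theta_{0}|\ge M-\epsilon\sqrt{c}$ on $|\theta-\theta_{0}|<\eta=\epsilon$, whence $\nu(\theta,t_{0})+\epsilon\sqrt{c}\ge M\ge(1-\epsilon)M$. In both cases the asserted inequality $(1-\epsilon)\nu(\theta_{0},t_{0})\le\nu(\theta,t_{0})+\epsilon\sqrt{c}$ holds on $(\theta_{0}-\eta,\theta_{0}+\eta)$ with $\eta=\epsilon$ (taking $\epsilon<1$ so that $\epsilon^{2}/2\le\epsilon$ and $\epsilon<\pi/2$).

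The point requiring care, and the main obstacle, is that $\eta$ must depend on $\epsilon$ alone, not on the a priori unknown size $M$ of the maximum nor on $t_{0}$. This is precisely why the two-term form of Lemma \ref{est} is the right tool: when the running maximum dominates, the estimate is scale invariant and produces a purely relative error $(1-\epsilon)M$ with $\eta$ independent of $M$; when instead the initial data dominate, the gradient is controlled by the fixed constant $\sqrt{c}$, and the additive slack $\epsilon\sqrt{c}$ built into the statement absorbs the resulting additive error, again with $\eta$ independent of $M$ and $t_{0}$. Organizing this dichotomy so that the single choice $\eta=\epsilon$ works uniformly is the only real subtlety; the two integrations themselves are routine.
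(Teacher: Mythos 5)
Your proof is correct, and it rests on the same key input as the paper's --- the gradient estimate of Lemma \ref{est}, integrated in $\theta$ outward from the maximum point, with the final choice $\eta=\epsilon$ --- but you organize it differently. The paper avoids your dichotomy entirely with one subadditivity step: from Lemma \ref{est} and the hypothesis it bounds $|\nu_{\theta}(\cdot,t_{0})|\le\sqrt{M^{2}+c}\le M+\sqrt{c}$ (where $M=\nu(\theta_{0},t_{0})$), integrates once to get $M\le\nu(\theta,t_{0})+\eta M+\eta\sqrt{c}$, and sets $\eta=\epsilon$; the multiplicative error $\epsilon M$ and the additive error $\epsilon\sqrt{c}$ fall out simultaneously, with no case analysis. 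Your version splits according to whether $M^{2}\ge c$ or $M^{2}<c$: in the second case you reproduce the paper's crude linear integration, while in the first case your $\arcsin$ comparison yields the scale-invariant bound $\nu(\theta,t_{0})\ge M\cos|\theta-\theta_{0}|$, which is genuinely sharper than needed (a quadratic rather than linear deficit near $\theta_{0}$) but costs extra care: the asserted $1$-Lipschitz property of $\arcsin(\nu/M)$ is not immediate from the derivative formula $\nu_{\theta}/\sqrt{M^{2}-\nu^{2}}$, which is only valid on the open set where $\nu<M$, so a limiting argument across the set $\{\nu=M\}$ (where $\nu_{\theta}=0$) is needed to make it rigorous --- a standard but unstated detail. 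In short: your argument buys a stronger conclusion in the dominant regime at the price of a case split and an ODE-comparison subtlety, whereas the paper's single inequality $\sqrt{M^{2}+c}\le M+\sqrt{c}$ collapses both regimes into a three-line computation; your closing observation about why $\eta$ can be taken to depend on $\epsilon$ alone correctly identifies the point of the two-term structure of Lemma \ref{est}.
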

\begin{proof}
By Lemma \ref{est},we have
\begin{align*}
\nu(\theta_{0},t_{0})&=\nu(\theta,t_{0})+\int^{\theta_{0}}_{\theta}\nu_{\theta}(\theta,t_{0})d\theta\\
&\le\nu(\theta,t_{0})+\lvert\theta_{0}-\theta\lvert\max\limits_{\theta\in S^{1}}\lvert\nu_{\theta}(\theta,t_{0})\lvert\\
&\le\nu(\theta,t_{0})+\lvert\theta_{0}-\theta\lvert\sqrt{\max\limits_{S^{1}\times[0,t_{0}]}\nu^{2}(\theta,t)+c}\\
&=\nu(\theta,t_{0})+\lvert\theta_{0}-\theta\lvert\sqrt{\nu^{2}(\theta_{0},t_{0})+c}\\
&\le\nu(\theta,t_{0})+\eta\nu(\theta_{0},t_{0})+\eta\sqrt{c}.
\end{align*}
Choose $\eta=\epsilon$. This finishes the proof.
\end{proof}

\begin{lemma}\label{lok}(Lower bound of the curvature)
Assume $X(u, t):S^{1}\times[0,T)\rightarrow\mathbb{R}^{2}$ is a smooth convex solution of the flow \eqref{flow1} (or \eqref{flow2}). Then there exists a constant $c_{1}>0$ ,which is independent of time, such that
\begin{equation*}
\kappa(\theta,t)\ge c_{1}>0,\quad\forall(\theta,t)\in S^{1}\times[0,T).
\end{equation*}
\end{lemma}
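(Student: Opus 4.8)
The plan is to prove the equivalent statement that $\nu=\rho^{n}=\kappa^{-n}$ admits a \emph{time-independent} upper bound, since $\kappa=\nu^{-1/n}$ and a uniform bound $\nu\le C$ immediately yields $\kappa\ge C^{-1/n}=:c_{1}>0$. Thus I would fix an arbitrary $t\in[0,T)$, set $M:=\max_{S^{1}\times[0,t]}\nu$, and try to bound $M$ by a constant depending only on the initial data. The maximum is attained at some $(\theta_{0},t_{0})$ with $t_{0}\le t$; if $t_{0}=0$ then $M\le\max_{S^{1}}\nu(\cdot,0)$ and we are done, so the interesting case is $t_{0}>0$, where $(\theta_{0},t_{0})$ is a genuine space-time maximum and Lemma \ref{new} applies.

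The key observation I would exploit is the identity $L(t)=\int_{0}^{2\pi}\rho(\theta,t)\,d\theta=\int_{0}^{2\pi}\nu^{1/n}(\theta,t)\,d\theta$, which couples the globally bounded length to the integral of a power of $\nu$. Taking $\epsilon=\tfrac12$ in Lemma \ref{new} produces a window $(\theta_{0}-\eta,\theta_{0}+\eta)$ of half-width $\eta=\epsilon$ on which $\nu(\cdot,t_{0})\ge(1-\epsilon)M-\epsilon\sqrt{c}$; integrating $\nu^{1/n}$ over this window and discarding the rest gives
\[
L(t_{0})\;\ge\;2\epsilon\big((1-\epsilon)M-\epsilon\sqrt{c}\big)^{1/n}
\]
(the case $(1-\epsilon)M-\epsilon\sqrt{c}\le0$ already bounds $M$). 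Since Lemma \ref{lajie} gives $L(t_{0})\le L(0)$, this inequality caps $M$: solving for $M$ yields $M\le\frac{1}{1-\epsilon}\big[(L(0)/2\epsilon)^{n}+\epsilon\sqrt{c}\big]$, a bound depending only on $L(0),c,n$. As this holds for every $t$, the quantity $\sup_{S^{1}\times[0,T)}\nu$ is bounded independently of time, and the claimed lower bound on $\kappa$ follows.

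I would stress that this length-based argument treats both flows \eqref{flow1} and \eqref{flow2} at once, since it uses only the common facts that $L$ is nonincreasing and that $L=\int\nu^{1/n}\,d\theta$, never referring to the explicit form of $\lambda(t)$. This uniformity is precisely the point. The more naive route --- inserting $\nu_{\theta\theta}\le0$ at the spatial maximum into \eqref{nt} to obtain $\frac{d}{dt}\nu_{\max}\le n\nu_{\max}^{p}(\nu_{\max}-\lambda)$ and then forcing $\lambda>\nu_{\max}$ --- does work for \eqref{flow1}, where Lemma \ref{new} makes $\lambda\gtrsim\int\nu^{1+1/n}$ grow \emph{superlinearly} in $M$, but it fails for \eqref{flow2}, whose $\lambda\sim\int\nu$ is only linear in $M$ with too small a constant to beat $\nu_{\max}$. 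The main obstacle is therefore conceptual rather than computational: one must recognize that the analytic work has already been carried out by the gradient estimate of Lemma \ref{est} and the near-maximum oscillation control of Lemma \ref{new}, and that the correct global quantity to pair them with is the length $L(t)$ itself, not the nonlocal term $\lambda(t)$.
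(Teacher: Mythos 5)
Your proposal is correct and takes essentially the same route as the paper: both combine the gradient estimate of Lemma \ref{est}, through the near-maximum control of Lemma \ref{new}, with the identity $L(t)=\int_{0}^{2\pi}\nu^{1/n}(\theta,t)\,d\theta$ and the monotonicity $L(t)\le L(0)$ to bound $\nu$ from above and hence $\kappa$ from below. The only difference is presentational: the paper argues by contradiction along a sequence $t_{i}$ with $\max_{S^1}\nu(\cdot,t_i)\to\infty$, while you extract an explicit bound on $M$ --- and in doing so you handle the space-time maximum slightly more carefully, ensuring the hypothesis of Lemma \ref{new} genuinely holds at $(\theta_{0},t_{0})$.
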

\begin{proof}
At first we claim that there exists a time-independent constant $c_{2}>0$ such that
\begin{equation}\label{nuu}
\max\limits_{S^{1}\times[0,T)}\nu(\theta,t)\le c_{2}.
\end{equation}
If \eqref{nuu} does not hold, then we can find a sequence $\{t_{i}\}^{\infty}_{i=1}\rightarrow T$ such that $\max_{S^{1}}(\theta,t_{i})$ goes to infinity as $t\rightarrow\infty$. By combining Lemma \ref{new}, one can get $L(t_{i})=\int^{2\pi}_{0}\nu^{\frac{1}{n}}(\theta,t_{i})d\theta$ goes to infinity as $t\rightarrow\infty$, which contradicts the fact that $L(t)\le L(0)$. Thus, the claim holds. Then by \eqref{nuu} we have
\begin{equation}\label{ke}
\big(\frac{1}{\min\limits_{S^{1}\times[0,T)}\kappa(\theta,t)}\big)^{n}=\max\limits_{S^{1}\times[0,T)}\nu(\theta,t)\le c_{2},
\end{equation}
which implies that the curvature $\kappa(\theta,t)$ has a time-independent positive lower bound. This finishes the proof.
\end{proof}

\begin{corollary}
(Bounds on the nonlocal term $\lambda(t)$)
Assume $X(u, t):S^{1}\times[0,T)\rightarrow\mathbb{R}^{2}$ is a smooth convex solution of the flow \eqref{flow1} (or \eqref{flow2}). Then there exist two positive  constants $c_{3}$ and $c_{4}$, independent of time,such that
\begin{equation}\label{laj}
c_{3}\le\lambda(t)\le c_{4},\quad\forall t\in[0,T).
\end{equation}
\end{corollary}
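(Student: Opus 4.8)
The plan is to estimate, for each flow separately, the two factors making up $\lambda(t)$: a purely geometric coefficient built from $L$ and $A$, and a curvature integral. Recall that for the flow \eqref{flow1} we have $\lambda(t)=\frac{L(t)}{2(L^{2}(t)-2\pi A(t))}\int_{0}^{2\pi}\nu^{1+\frac1n}(\theta,t)\,d\theta$, and for the flow \eqref{flow2} we have $\lambda(t)=\frac{L^{2}(t)-2\pi A(t)}{\pi L^{2}(t)}\int_{0}^{2\pi}\nu(\theta,t)\,d\theta$. Since each factor will be bounded above and below by time-independent positive constants, the product is too.

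First I would control the geometric coefficient using Lemma \ref{lajie} together with the isoperimetric inequality $L^{2}\ge 4\pi A$. The isoperimetric inequality gives both $L^{2}-2\pi A\ge 4\pi A-2\pi A=2\pi A\ge 2\pi A(0)>0$, which keeps the denominator away from zero, and $L^{2}-2\pi A\le L^{2}\le L^{2}(0)$. Combined with $\sqrt{4\pi A(0)}\le L\le L(0)$ from \eqref{la}, this traps the coefficient $\frac{L}{2(L^{2}-2\pi A)}$ of the flow \eqref{flow1} between two positive constants. For the flow \eqref{flow2} the coefficient is $\frac{L^{2}-2\pi A}{\pi L^{2}}=\frac{1}{\pi}\big(1-\frac{2\pi A}{L^{2}}\big)$, and since $\frac{2\pi A}{L^{2}}\in(0,\tfrac12]$ by the isoperimetric inequality, this coefficient lies in $[\tfrac{1}{2\pi},\tfrac{1}{\pi}]$ with no dependence on the initial data at all.

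Next I would bound the curvature integrals. The upper bounds are immediate from Lemma \ref{lok}, whose proof supplies the uniform bound $\nu\le c_{2}$: thus $\int_{0}^{2\pi}\nu^{1+\frac1n}\,d\theta\le 2\pi c_{2}^{1+\frac1n}$ and $\int_{0}^{2\pi}\nu\,d\theta\le 2\pi c_{2}$. For the lower bounds I would write $\nu^{1/n}=\rho=1/\kappa$ and use that $\int_{0}^{2\pi}\nu^{1/n}\,d\theta=L$. Since $x\mapsto x^{n+1}$ and $x\mapsto x^{n}$ are convex for $n\ge1$, Jensen's inequality yields $\int_{0}^{2\pi}\nu^{1+\frac1n}\,d\theta=\int_{0}^{2\pi}(\nu^{1/n})^{n+1}\,d\theta\ge 2\pi\big(\frac{L}{2\pi}\big)^{n+1}$ and likewise $\int_{0}^{2\pi}\nu\,d\theta\ge 2\pi\big(\frac{L}{2\pi}\big)^{n}$. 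Invoking $L\ge\sqrt{4\pi A(0)}$ from \eqref{la}, both integrals are bounded below by positive time-independent constants.

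Multiplying the coefficient bounds by the integral bounds produces $c_{3}\le\lambda(t)\le c_{4}$ with constants depending only on $L(0),A(0),n$ and $c_{2}$. I expect the lower bound on $\lambda(t)$ to be the only genuinely nontrivial point: the upper bound falls out directly from the uniform estimate $\nu\le c_{2}$ of Lemma \ref{lok}, whereas the lower bound must rule out the curvature integral collapsing to zero. This is precisely what the Jensen step accomplishes, by tying the integral from below to the length $L$, which Lemma \ref{lajie} keeps bounded away from zero.
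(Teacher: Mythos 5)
Your proposal is correct and takes essentially the same route as the paper: your Jensen step $\int_{0}^{2\pi}\rho^{n+1}\,d\theta\ge L^{n+1}/(2\pi)^{n}$ is exactly the H\"older bound \eqref{ine1} that the paper derives from \eqref{hold}, and the remaining ingredients (the bounds \eqref{la}, the uniform estimate \eqref{nuu}, and the isoperimetric inequality to control the coefficients $\frac{L}{2(L^{2}-2\pi A)}$ and $\frac{L^{2}-2\pi A}{\pi L^{2}}$) match the paper's. You simply make explicit the coefficient estimates and the flow-\eqref{flow2} case that the paper compresses into its final sentence.
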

\begin{proof}
By \eqref{hold}, we have
\begin{equation}\label{ine1}
\int^{2\pi}_{0}\frac{1}{\kappa^{n+1}}d\theta\ge\frac{L^{n+1}}{(2\pi)^{n}}.
\end{equation}
Combining \eqref{la} and \eqref{nuu}, we cam easily get \eqref{laj}.
\end{proof}

\subsection{Long time existence}
The curvature estimate established so far and the parabolic regularity theory implies the
following:
\begin{lemma}\label{longtime}(Long time existence of the flow)
Assume $n\ge1$ and $X_{0}(u),u\in S^{1}$, is a smooth convex closed curve. Consider the flow \eqref{flow1} (or \eqref{flow2}) and assume that the curvature $\kappa$ of the evolving curve will not blow up in any finite time during the evolution, then it has a unique smooth convex solution $X(u,t):S^{1}\times[0,\infty)\rightarrow\mathbb{R}^{2}$ defined for all time $t\in[0,\infty)$.
\end{lemma}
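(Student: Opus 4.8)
The plan is to run the standard continuation argument on the maximal time interval. By the short-time existence noted in the introduction there is a smooth convex solution on some $[0,T)$ with $T>0$ maximal; I would argue by contradiction, assuming $T<\infty$, and show that the solution then extends smoothly across $t=T$, contradicting maximality and forcing $T=\infty$.

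The first step is to pin $\nu=\kappa^{-n}$ between two positive constants on $S^{1}\times[0,T)$. Lemma \ref{lok} gives the time-independent bound $\kappa\ge c_{1}>0$, equivalently $\nu\le c_{1}^{-n}$, so $\nu$ is bounded above uniformly. The no-blow-up hypothesis gives $\sup_{[0,T)}\kappa\le C_{T}<\infty$ on the finite interval, hence $\nu\ge C_{T}^{-n}>0$. Thus there are constants with $0<a\le\nu(\theta,t)\le b$ on $S^{1}\times[0,T)$ (only $a$ being allowed to depend on $T$). Since $p=1-\tfrac{1}{n}\ge0$ for $n\ge1$, the coefficient $n\nu^{p}$ in \eqref{nt} is then bounded above and bounded away from zero, so \eqref{nt} is uniformly parabolic on $[0,T)$; moreover the zeroth-order term is controlled because $\lambda(t)$ is bounded by \eqref{laj}.

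Next I would bootstrap regularity. A spatial-gradient bound is already available: Lemma \ref{est} controls $\Phi=\nu^{2}+\nu_{\theta}^{2}$ by $\max\{\sup\nu^{2},\max_{S^{1}\times\{0\}}\Phi\}$, and since $\sup\nu$ equals the time-independent constant $c_{1}^{-n}$, this bounds $\nu_{\theta}$ uniformly on $S^{1}\times[0,T)$. Writing \eqref{nt} in non-divergence form as a uniformly parabolic equation with bounded coefficients and bounded right-hand side, I would apply the Krylov--Safonov interior estimates to obtain a uniform parabolic $C^{\alpha}$ bound for $\nu$, then freeze the (now $C^{\alpha}$) leading coefficient and invoke Schauder theory to upgrade to $C^{2+\alpha}$, iterating by differentiating the equation to obtain uniform $C^{k}$ bounds for every $k$ on $S^{1}\times[0,T)$. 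These time-uniform estimates force $\nu(\cdot,t)$ to converge in $C^{\infty}(S^{1})$ as $t\to T$ to a smooth limit satisfying $a\le\nu(\cdot,T)\le b$, so the limiting curve is smooth and strictly convex.

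Finally I would restart the flow from the smooth convex curve $X(\cdot,T)$, applying the short-time existence result once more to obtain a solution on $[T,T+\delta)$; by local uniqueness this glues with the original solution to give a smooth convex solution past $T$, contradicting the maximality of $T$. Hence $T=\infty$, and uniqueness on $[0,\infty)$ follows from the same local uniqueness propagated along the whole time axis. The one genuinely nontrivial ingredient is the passage from the two-sided bound on $\nu$ to full $C^{\infty}$ control; once the no-blow-up hypothesis is granted this is a routine---if notationally heavy---parabolic bootstrap, and it is the step I would expect to demand the most care.
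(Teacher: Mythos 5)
Your proposal is correct and follows essentially the same route as the paper: a continuation argument in which Lemma \ref{lok} together with the no-blow-up hypothesis pins the curvature between positive bounds on any finite interval, parabolic regularity (which the paper invokes in one line, citing Lemma 3.2 of \cite{gpt1}, and which you flesh out via the gradient bound of Lemma \ref{est} plus Krylov--Safonov and Schauder bootstrapping) yields uniform $C^{k}$ bounds, and the flow is then restarted from the smooth strictly convex limit curve $X(\cdot,T)$. The only nitpick is your remark that $\sup\nu$ \emph{equals} the constant $c_{1}^{-n}$ --- Lemma \ref{lok} gives $\nu\le c_{1}^{-n}$ as an upper bound only --- but this slip does not affect the argument.
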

\begin{proof}
By parabolic theory, there exists a unique smooth convex solution $X(u,t):S^{1}\times[0,T)\rightarrow\mathbb{R}^{2}$ defined for short time $T>0$. On the domain $S^{1}\times[0,T)$, by Lemma \ref{lok} and the assumption, the curvature $\kappa$ has uniform positive upper and lower bounds. Parabolic regularity then implies that all space-time derivatives of $\kappa$ are uniformly bounded on $S^{1}\times[0,T)$. Thus, the evolving convex closed curve $X(\cdot,t)$ converges in $C^{\infty}$ to a smooth convex closed curve $X(\cdot,T)$ as $t\to T$ and we can use $X(\cdot,T)$ as a new initial curve to continue this flow.For more details of argument, see Lemma 3.2 in \cite{gpt1}.
\end{proof}

\subsection{Convergence of the flow under the long time existence assumption}
\begin{lemma}
(Convergence of the isoperimetric difference)
Assume $n\ge1$ and $X(u, t):S^{1}\times[0,\infty)\rightarrow\mathbb{R}^{2}$ is a smooth convex solution of the flow \eqref{flow1} (or \eqref{flow2}). Then the isoperimetric difference $L^{2}(t)-4\pi A(t)$ decreases and decays to zero exponentially as $t\to\infty$.
\end{lemma}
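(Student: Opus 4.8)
The plan is to derive a single autonomous differential inequality for the isoperimetric difference $D(t):=L^{2}(t)-4\pi A(t)$ and then apply Gr\"onwall. Throughout I would write $Q:=\int_{0}^{2\pi}\kappa^{-n}\,d\theta$ and $P:=\int_{0}^{2\pi}\kappa^{-(n+1)}\,d\theta$, so that $\frac{dL}{dt}=Q-2\pi\lambda(t)$ and $\frac{dA}{dt}=P-L\lambda(t)$ by \eqref{lan}.

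First I would compute $\frac{dD}{dt}=2L\frac{dL}{dt}-4\pi\frac{dA}{dt}$ directly from \eqref{la1} (and \eqref{la2}). Substituting the explicit expressions for $\lambda(t)$, a short calculation shows that \emph{for both flows} the nonlocal terms involving $\lambda(t)$ cancel, leaving the clean identity
\[
\frac{dD}{dt}=2LQ-4\pi P .
\]
For \eqref{flow1} the cancellation is immediate since $2L^{2}-4\pi A=2(L^{2}-2\pi A)$; for \eqref{flow2} it requires the $L$-equation to simplify to $\frac{dL}{dt}=-Q\,(L^{2}-4\pi A)/L^{2}$ (I expect the coefficient recorded in \eqref{la2} to reduce to this form), after which the same identity emerges. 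This cancellation is the crux, and is exactly what makes $D$ a natural Lyapunov quantity; I anticipate that verifying it for both speed functions is the only genuinely delicate bookkeeping step.

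Next I would feed in the Lin--Tsai inequality \eqref{LT}, valid precisely for $n\ge1$, in the form $Q\le \frac{\pi L}{L^{2}-2\pi A}P$, to obtain
\[
\frac{dD}{dt}\le \frac{2\pi L^{2}}{L^{2}-2\pi A}\,P-4\pi P=-\frac{2\pi P}{L^{2}-2\pi A}\,(L^{2}-4\pi A)=-\frac{2\pi P}{L^{2}-2\pi A}\,D .
\]
Since $L^{2}\ge 4\pi A>2\pi A$, the coefficient is nonnegative, so $\frac{dD}{dt}\le0$; combined with the isoperimetric inequality $D\ge0$, this already establishes that $D$ decreases.

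Finally, to upgrade monotonicity to exponential decay I need the coefficient $\frac{2\pi P}{L^{2}-2\pi A}$ bounded below by a positive time-independent constant. The denominator is controlled from above by $L^{2}-2\pi A\le L^{2}(0)$, using $L(t)\le L(0)$ and $A(t)\ge A(0)>0$ from \eqref{la}. The numerator is controlled from below by the H\"older bound \eqref{ine1}, namely $P\ge L^{n+1}/(2\pi)^{n}\ge (4\pi A(0))^{(n+1)/2}/(2\pi)^{n}$, again using the uniform lower bound $L(t)\ge\sqrt{4\pi A(0)}$ from \eqref{la}. Hence $\frac{dD}{dt}\le -C\,D$ for a constant $C>0$ depending only on $L(0)$ and $A(0)$, and Gr\"onwall's inequality yields $0\le D(t)\le D(0)e^{-Ct}\to0$ as $t\to\infty$. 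Note that all constants here come from the length/area bounds of \eqref{la} and do not invoke the curvature lower bound, so the argument is essentially self-contained; the main obstacle remains Step~1, after which Steps~2 and 3 follow at once from inequalities already recorded in the excerpt.
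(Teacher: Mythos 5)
Your proposal is correct and takes essentially the same route as the paper: the $\lambda(t)$-terms cancel identically in $\frac{d}{dt}\big(L^{2}-4\pi A\big)$ for both flows (automatically, from \eqref{lan}, for \emph{any} $\lambda$ --- the special simplification you anticipate for \eqref{flow2} is not needed), and then Lin--Tsai \eqref{LT} together with the bounds \eqref{la} and \eqref{ine1} yields $\frac{dD}{dt}\le -CD$ and Gr\"onwall finishes. The only cosmetic difference is that you apply \eqref{LT} as an upper bound on $\int_{0}^{2\pi}\kappa^{-n}d\theta$, getting the coefficient $\frac{2\pi}{L^{2}-2\pi A}\int_{0}^{2\pi}\kappa^{-(n+1)}d\theta$, while the paper applies it as a lower bound on $\int_{0}^{2\pi}\kappa^{-(n+1)}d\theta$, getting $\frac{2}{L}\int_{0}^{2\pi}\kappa^{-n}d\theta$; both coefficients are bounded below by the same a priori estimates, so the conclusions coincide.
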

\begin{proof}
By \eqref{lan}, we have
\begin{align*}
\frac{d}{dt}\big(L^{2}(t)-4\pi A(t)\big)&=2L(t)\big(\int^{2\pi}_{0}\nu d\theta-2\pi\lambda(t)\big)-4\pi\big(\int^{2\pi}_{0}\nu^{1+\frac{1}{n}} d\theta-L\lambda(t)\big)\\
&=2L(t)\int^{2\pi}_{0}\frac{1}{\kappa^{n}(\theta,t)}d\theta-4\pi\int^{2\pi}_{0}\frac{1}{\kappa^{n+1}(\theta,t)}d\theta\le0,
\end{align*}
where the inequality is due to the H$\ddot{o}$lder inequality
$$\int^{2\pi}_{0}\frac{1}{\kappa}d\theta\int^{2\pi}_{0}\frac{1}{\kappa^{n}}d\theta\le\int^{2\pi}_{0}d\theta
\int^{2\pi}_{0}\frac{1}{\kappa^{n+1}}d\theta.$$
By \eqref{LT}, we obtain
\begin{equation}\label{lattt}
\frac{d}{dt}\big(L^{2}(t)-4\pi A(t)\big)\le-\frac{2}{L(t)}\big(L^{2}(t)-4\pi A(t)\big)\int^{2\pi}_{0}\frac{1}{\kappa^{n}}d\theta.
\end{equation}
By \eqref{lajie} and \eqref{ine1}, we have
\begin{equation*}
\int^{2\pi}_{0}\frac{1}{\kappa^{n}}d\theta\ge\frac{L^{n}}{(2\pi)^{n-1}}\ge\frac{(\sqrt{4\pi A(0)})^{n}}{(2\pi)^{n-1}},
\end{equation*}
which, together with \eqref{lattt}, implies
\begin{equation}
\frac{d}{dt}\big(L^{2}(t)-4\pi A(t)\big)\le-\frac{2(\sqrt{4\pi A(0)})^{n}}{L(0)(2\pi)^{n-1}}\big(L^{2}(t)-4\pi A(t)\big).
\end{equation}
This finishes the proof.
\end{proof}

\begin{lemma}\label{ntjj}
Assume $n\ge1$ and $X(u, t):S^{1}\times[0,\infty)\rightarrow\mathbb{R}^{2}$ is a smooth convex solution of the flow \eqref{flow1} (or \eqref{flow2}). Then there exist positive constants $D_{i}$(i=1,2,3\dots), independent of time, such that
\begin{equation}\label{ntj}
\bigg\lvert\frac{\partial^{i}\nu}{\partial\theta^{i}}(\theta,t)\bigg\lvert\le D_{i},\quad\forall (\theta,t)\in S^{1}\times[0,\infty).
\end{equation}
\end{lemma}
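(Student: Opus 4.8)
\section*{Proof proposal for Lemma \ref{ntjj}}

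The plan is to prove \eqref{ntj} by induction on $i$, after first upgrading the pointwise control of $\nu$ to a two-sided, time-independent bound $0<c_{0}\le\nu\le c_{2}$ on $S^{1}\times[0,\infty)$. The upper bound $\nu\le c_{2}$ is already supplied by \eqref{nuu} in the proof of Lemma \ref{lok}, and the case $i=1$ is essentially done: Lemma \ref{est} together with $\nu\le c_{2}$ gives $\nu_{\theta}^{2}\le\Phi\le\max\{c_{2}^{2},\max_{S^{1}\times\{0\}}\Phi\}$, so $|\nu_{\theta}|\le D_{1}$ with $D_{1}$ independent of $t$. What is missing, and is the heart of the matter, is the lower bound $\nu\ge c_{0}>0$, equivalently a uniform upper bound on the curvature $\kappa=\nu^{-1/n}$.

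For the lower bound I would argue as follows. By Jensen's inequality and $n\ge1$, the spatial average satisfies $\frac{1}{2\pi}\int_{0}^{2\pi}\nu\,d\theta=\frac{1}{2\pi}\int_{0}^{2\pi}\rho^{n}\,d\theta\ge\big(\tfrac{1}{2\pi}\int_{0}^{2\pi}\rho\,d\theta\big)^{n}=(L/2\pi)^{n}$, which by \eqref{la} is bounded below by a positive constant for all $t$. Hence it suffices to show that the oscillation of the radius of curvature $\rho$ tends to $0$, so that $\nu$ cannot fall far below its average. Writing the support function in its Fourier series and using the classical identities $L=2\pi a_{0}$ and $L^{2}-4\pi A=2\pi^{2}\sum_{k\ge2}(k^{2}-1)(a_{k}^{2}+b_{k}^{2})$ together with $\rho=h+h_{\theta\theta}=a_{0}+\sum_{k\ge2}(1-k^{2})(a_{k}\cos k\theta+b_{k}\sin k\theta)$, the exponential decay of $L^{2}-4\pi A$ established in the previous lemma forces every mode $k\ge2$ to decay; interpolating this decay against a time-independent bound on $\rho_{\theta}$ (obtained from a maximum-principle gradient estimate for $\rho$ analogous to Lemma \ref{est}) shows $\|\rho-\bar\rho\|_{L^{\infty}}\to0$ as $t\to\infty$. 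Since $\bar\rho=L/2\pi$ is bounded below, this yields $\rho\ge c_{0}^{1/n}>0$, hence $\nu\ge c_{0}>0$, for all large $t$; on the remaining compact initial interval the no-blow-up hypothesis and continuity give a positive lower bound as well, and we take the smaller constant.

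With $0<c_{0}\le\nu\le c_{2}$ in hand, equation \eqref{nt} becomes uniformly parabolic: its leading coefficient $n\nu^{p}$ lies in $[nc_{0}^{p},nc_{2}^{p}]$, and by \eqref{laj} the zeroth-order data $n\nu^{p}(\nu-\lambda)$ is uniformly bounded. I would then run the induction either by differentiating \eqref{nt} $i$ times and applying the maximum principle to $(\partial_{\theta}^{i}\nu)^{2}$---at a spatial maximum $\partial_{\theta}^{i+1}\nu=0$, so the single highest-order term produced by the chain rule drops out and the remaining terms are controlled by the induction hypothesis and by $c_{0},c_{2},\lambda$---or, more robustly for time-independent constants, by applying interior parabolic (Schauder) estimates on the sliding cylinders $S^{1}\times[t-1,t]$. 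Because the coefficient bounds are the same on every such cylinder, the resulting $C^{k}$ estimates are uniform in $t$, and together with the smoothness of the initial data on $S^{1}\times[0,1]$ they give \eqref{ntj}.

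The main obstacle is exactly the time-independent lower bound on $\nu$ when $n>1$ (for $n=1$ one has $p=0$ and \eqref{nt} is already uniformly parabolic, so this step is unnecessary). A naive maximum principle applied to $\min_{\theta}\rho$ only yields $\frac{d}{dt}\min_{\theta}\rho\ge(\min_{\theta}\rho)^{n}-\lambda$, which does not prevent $\rho$ from degenerating as $t\to\infty$; it is the geometric fact that the curve rounds up---quantified by the exponential decay of $L^{2}-4\pi A$---that must be exploited to rule this out, and stitching the large-time rounding estimate to the finite-time no-blow-up hypothesis is the delicate part of the argument.
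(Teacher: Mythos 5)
Your proposal and the paper's proof diverge at both steps, so let me take them in turn. For $i=1$ the paper does not use Lemma \ref{est} at all: it applies the maximum principle afresh to $U=\nu_{\theta}+\xi\nu^{2}$ with $\xi=\pm1$, exploiting the good quadratic term $\xi\nu^{p}(2-4n)\nu_{\theta}^{2}$, and disposes of $i>1$ by induction following \cite{LT}. Your shortcut --- $\nu_{\theta}^{2}\le\Phi\le\max\{c_{2}^{2},\max_{S^{1}\times\{0\}}\Phi\}$ from Lemma \ref{est} and \eqref{nuu} --- is correct and in fact cleaner, since it needs no positive lower bound on $\nu$, whereas the paper's computation does (its drift terms carry the factor $\nu^{p-1}=\nu^{-1/n}$). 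Your sliding-cylinder Schauder argument for $i\ge2$ is also sound once \eqref{nt} is uniformly parabolic. Most importantly, you are right that the two-sided bound is the crux: the paper's proof simply asserts $0<c_{5}\le\nu\le c_{6}$ in \eqref{nuji} ``by \eqref{nuu}'', but \eqref{nuu} supplies only the upper bound; a \emph{time-independent} lower bound on $\nu$, i.e.\ a uniform-in-time upper bound on $\kappa$, does not follow from the no-finite-time-blow-up hypothesis (which only controls compact time intervals), and the paper never proves it. Your diagnosis here is sharper than the paper's own text.

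However, your proof of that lower bound has a genuine gap. The Fourier step is fine as far as it goes: splitting the modes at $k\sim K$ and optimizing does give $\Vert\rho-\bar\rho\Vert_{L^{2}}^{2}\lesssim K^{2}\,(L^{2}-4\pi A)+K^{-2}\Vert\rho_{\theta}\Vert_{L^{2}}^{2}$, which tends to $0$ --- \emph{provided} $\rho_{\theta}$ admits a time-independent bound. That bound you invoke as ``a maximum-principle gradient estimate for $\rho$ analogous to Lemma \ref{est}'' without proof, and for $n>1$ it is essentially the statement you are trying to prove: $\rho_{\theta}=\frac{1}{n}\nu^{\frac{1}{n}-1}\nu_{\theta}$ with $\frac{1}{n}-1<0$, so the known bound on $\nu_{\theta}$ controls $\rho_{\theta}$ only where $\nu$ is already bounded below, and the equation $\rho_{t}=n\rho^{n-1}\rho_{\theta\theta}+n(n-1)\rho^{n-2}\rho_{\theta}^{2}+\rho^{n}-\lambda$ degenerates, with the badly-signed first-order term $n(n-1)\rho^{n-2}\rho_{\theta}^{2}$ blowing up, exactly where $\rho\to0$; the computation of Lemma \ref{est} does not transplant, and as written your argument is circular. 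The gap is repairable without touching $\rho_{\theta}$: your own $D_{1}$ suffices. If $\nu(\theta_{0},t)\le\delta$, then $\nu\le\delta+D_{1}\eta$ on $(\theta_{0}-\eta,\theta_{0}+\eta)$, so the tangent turns through angle $2\eta$ within arclength $\int\rho\,d\theta\le2\eta(\delta+D_{1}\eta)^{1/n}$, forcing the chord between the points with normal angles $\theta_{0}\pm\eta$ to be at most that short; on the other hand, the Bonnesen inequality $L^{2}-4\pi A\ge\pi^{2}(r_{out}-r_{in})^{2}$, the exponential decay of the deficit, and $L\ge\sqrt{4\pi A(0)}$ trap the curve, for large $t$, in an arbitrarily thin annulus about a circle of radius $r$ bounded below, where that chord has length at least roughly $2r\sin\eta-O(\sqrt{r_{out}-r_{in}})$. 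Fixing $\eta$ small with $\delta+D_{1}\eta<(r/2)^{n}$ gives a contradiction for large $t$; the remaining compact time interval is handled by smoothness, as you say. This corner-type argument is the one used in \cite{gpt1,gpt2}; with it your proposal closes (and then actually fills a hole the paper leaves open), but without it the lower bound, hence uniform parabolicity for $n>1$, remains unproven.
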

\begin{proof}
We will prove the case $i=1$ only. The proof for $i>1$ is similar and can be argued by mathematical induction(see \cite{LT}). By \eqref{nuu}, we can find two positive constants $c_{5}$ and $c_{6}$ such that
\begin{equation}\label{nuji}
0<c_{5}\le\nu(\theta,t)\le c_{6}.
\end{equation}
Let $U=\nu_{\theta}+\xi\nu^{2}$, where $\xi$ is a constant to be chosen later on. By \eqref{nt}, we have
\begin{align*}
\frac{\partial U}{\partial t}=&(\nu_{\theta})_{t}+2\xi\nu\nu_{t}\\
=&n\nu^{p}\nu_{\theta\theta\theta}+np\nu^{p-1}\nu_{\theta}\nu_{\theta\theta}
+np\nu^{p-1}(\nu-\lambda)+n\nu^{p}\nu_{\theta}
+2n\xi\nu^{p+1}\nu_{\theta\theta}+2n\xi\nu^{p+1}(\nu-\lambda)\\
=&n\nu^{p}(U_{\theta\theta}-2\xi\nu\nu_{\theta\theta}-2\xi\nu_{\theta}^{2})+np\nu^{p-1}\nu_{\theta}(U_{\theta}
-2\xi\nu\nu_{\theta})+n\nu^{p-1}\nu_{\theta}\big(p(\nu-\lambda)+\nu\big)\\
&+2n\xi\nu^{p+1}\nu_{\theta\theta}+2n\xi\nu^{p+1}(\nu-\lambda)\\
=&n\nu^{p}U_{\theta\theta}+np\nu^{p-1}\nu_{\theta}U_{\theta}-2n\xi\nu^{p}(p+1)\nu_{\theta}^{2}+n\nu^{p-1}
\big(p(\nu-\lambda)+\nu\big)\nu_{\theta}+2n\xi\nu^{p+1}(\nu-\lambda).
\end{align*}
Since $\nu^{2}, \nu^{p}$ and $\lambda$ are bounded quantities, if $U$ becomes sufficiently large (either negatively or positively), it must be due to the term $\nu_{\theta}$. The coefficient of $\nu_{\theta}^{2}$ is
\begin{equation*}
-2n\xi\nu^{p}(p+1)=2n\xi\nu^{p}(\frac{1}{n}-2)=\xi\nu^{p}(2-4n),\quad p=1-\frac{1}{n}.
\end{equation*}
Choose $\xi=-1$, then $U=\nu_{\theta}-\nu^{2}$ and $\xi\nu^{p}(2-4n)=\nu^{p}(4n-2)$ is strictly positive and if
$U(\theta^{\star},t)=U_{\min}(t)$ becomes sufficiently large (negatively) at the minimum point $\theta=\theta^{\star}$, we have
\begin{equation*}
\frac{\partial U}{\partial t}\ge\nu^{p}(4n-2)\nu_{\theta}^{2}+n\nu^{p-1}
\big(p(\nu-\lambda)+\nu\big)\nu_{\theta}-2n\nu^{p+1}(\nu-\lambda)>0\,\,at\,\,(\theta^{\star},t).
\end{equation*}
The minimum principle implies that the function $U=\nu_{\theta}-\nu^{2}$ has a negative time-independent lower bound and so is the function $\nu_{\theta}$. Similarly, if we choose $\xi=1$, then $U=\nu_{\theta}+\nu^{2}$ and $\xi\nu^{p}(2-4n)=\nu^{p}(2-4n)$ is strictly negative and if
$U(\theta^{\star},t)=U_{\max}(t)$ becomes sufficiently large (positively) at the maximum point $\theta=\theta^{\star}$, we have
\begin{equation*}
\frac{\partial U}{\partial t}\le\nu^{p}(2-4n)\nu_{\theta}^{2}+n\nu^{p-1}
\big(p(\nu-\lambda)+\nu\big)\nu_{\theta}+2n\nu^{p+1}(\nu-\lambda)>0\,\,at\,\,(\theta^{\star},t).
\end{equation*}
The maximum principle implies that the function $U=\nu_{\theta}-\nu^{2}$ has a positive time-independent upper bound and so is the function $\nu_{\theta}$. This finishes the proof.
\end{proof}

\begin{lemma}
Assume $n\ge1$ and $X(u, t):S^{1}\times[0,\infty)\rightarrow\mathbb{R}^{2}$ is a smooth convex solution of the flow \eqref{flow1} (or \eqref{flow2}). Then there exists a positive constant $c_{7}$, independent of time, such that
\begin{equation}\label{latj}
\big\lvert\lambda^{'}(t)\big\lvert\le c_{7},\quad\forall t\in[0,\infty).
\end{equation}
\end{lemma}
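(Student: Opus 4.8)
The plan is to write $\lambda(t)=N(t)/M(t)$ as a quotient of a numerator and a denominator, differentiate by the quotient rule $\lambda'=(N'M-NM')/M^{2}$, and then verify that $M$ stays bounded away from zero while each of $N,M,N',M'$ admits a uniform, time-independent bound; the desired inequality $|\lambda'(t)|\le c_{7}$ then follows at once. For the flow \eqref{flow1} I take $M(t)=2L^{2}(t)-4\pi A(t)$ and $N(t)=L(t)\int_{0}^{2\pi}\nu^{1+1/n}\,d\theta$, while for the flow \eqref{flow2} I take $M(t)=\pi L^{2}(t)$ and $N(t)=\big(L^{2}(t)-2\pi A(t)\big)\int_{0}^{2\pi}\nu\,d\theta$.

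First I would record the cheap bounds. By Lemma \ref{lajie} the quantities $L$ and $A$ lie in fixed compact intervals, and the isoperimetric inequality gives $L^{2}-2\pi A\ge L^{2}/2\ge 2\pi A(0)>0$; hence in either case $M(t)$ is bounded above and bounded below by a positive time-independent constant, so $M^{2}$ is bounded below, and $N$, being a product of such bounded factors with the integral $\int\nu^{1+1/n}\,d\theta$ (resp. $\int\nu\,d\theta$), is bounded using \eqref{nuji}. It remains to bound $N'$ and $M'$, which reduces to bounding $L'(t)$, $A'(t)$, and the time derivative of the integral term. The derivatives $L'$ and $A'$ are uniformly controlled by the expressions in \eqref{lan} together with $c_{5}\le\nu\le c_{6}$, $L\le L(0)$, and $\lambda\le c_{4}$ from \eqref{laj}.

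The substantive step is bounding the time derivative of the integral term. Differentiating under the integral sign and substituting the evolution equation \eqref{nt}, for the flow \eqref{flow2} I would compute
\begin{equation*}
\frac{d}{dt}\int_{0}^{2\pi}\nu\,d\theta=n\int_{0}^{2\pi}\nu^{p}\big(\nu_{\theta\theta}+\nu-\lambda\big)\,d\theta
=-np\int_{0}^{2\pi}\nu^{p-1}\nu_{\theta}^{2}\,d\theta+n\int_{0}^{2\pi}\nu^{p+1}\,d\theta-n\lambda\int_{0}^{2\pi}\nu^{p}\,d\theta,
\end{equation*}
where I integrated by parts once in the top-order term, the boundary contributions vanishing by $2\pi$-periodicity. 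Likewise, for the flow \eqref{flow1}, using $q-1+p=1$ with $q=1+\tfrac1n$,
\begin{equation*}
\frac{d}{dt}\int_{0}^{2\pi}\nu^{1+1/n}\,d\theta=nq\int_{0}^{2\pi}\nu\big(\nu_{\theta\theta}+\nu-\lambda\big)\,d\theta
=nq\Big(-\int_{0}^{2\pi}\nu_{\theta}^{2}\,d\theta+\int_{0}^{2\pi}\nu^{2}\,d\theta-\lambda\int_{0}^{2\pi}\nu\,d\theta\Big).
\end{equation*}
Every integrand on the right is uniformly bounded: $\nu$ and $\nu_{\theta}$ are controlled by \eqref{nuji} and Lemma \ref{ntjj} (case $i=1$), and $\lambda$ by \eqref{laj}. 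This yields uniform bounds on $N'$ and $M'$, and the quotient rule then produces the constant $c_{7}$.

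The one point requiring care, and the main (mild) obstacle, is the factor $\nu^{p-1}=\nu^{-1/n}$ arising in the flow \eqref{flow2} computation: since $p-1<0$ this quantity would blow up as $\nu\to0$, so the uniform positive lower bound $\nu\ge c_{5}>0$ of \eqref{nuji} (itself a consequence of Lemma \ref{lok}) is essential here. Beyond this, the estimate is pure bookkeeping: once the highest-order term $\nu_{\theta\theta}$ is eliminated by integration by parts, only the zeroth- and first-order spatial bounds of Lemma \ref{ntjj} are needed, and the uniform bounds on $L$, $A$, $L'$, $A'$, and $\lambda$ close the argument for both flows.
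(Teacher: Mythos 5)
Your proposal is correct and follows essentially the same route as the paper: differentiate $\lambda$ by the product/quotient rule, control $L'$ and $A'$ via \eqref{lan}, substitute the evolution equation \eqref{nt} for $\nu_{t}$, integrate by parts once to eliminate $\nu_{\theta\theta}$, and close with the uniform bounds \eqref{la}, \eqref{nuji}, \eqref{laj} and \eqref{ntj} (you additionally write out the flow \eqref{flow2} case explicitly and verify the denominator's positive lower bound, both of which the paper leaves implicit or dismisses as ``similar''). One small misattribution: the lower bound $\nu\ge c_{5}$ in \eqref{nuji} is \emph{not} a consequence of Lemma \ref{lok} --- that lemma, a lower bound on $\kappa$, yields the \emph{upper} bound $\nu\le c_{6}$, while the lower bound on $\nu=\kappa^{-n}$ comes from the standing assumption that the curvature stays bounded above; since you invoke \eqref{nuji} itself, this does not affect the validity of your argument.
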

\begin{proof}
For $\lambda(t)$ given by \eqref{flow1} we have
\begin{align*}
\big\lvert\lambda^{'}(t)\big\lvert=&\bigg\lvert\frac{d}{dt}\big(\frac{L}{2L^{2}-4\pi A}\big)\int^{2\pi}_{0}\nu^{1+\frac{1}{n}}d\theta+\frac{L}{2L^{2}-4\pi A}\frac{d}{dt}\bigg(\int^{2\pi}_{0}\nu^{1+\frac{1}{n}}d\theta\bigg)\bigg\lvert\\
=&\bigg\lvert\frac{L_{t}(2L^{2}-4\pi A)-(4LL_{t}-4\pi A_{t})L}{(2L^{2}-4\pi A)^{2}}\int^{2\pi}_{0}\nu^{1+\frac{1}{n}}d\theta+\frac{(1+\frac{1}{n})L}{2L^{2}-4\pi A}
\int^{2\pi}_{0}\nu^{\frac{1}{n}}\nu_{t}d\theta\big\lvert\\
=&\bigg\lvert\frac{(\int^{2\pi}_{0}\nu d\theta-2\pi\lambda)(2L^{2}-4\pi A)-\big(4L(\int^{2\pi}_{0}\nu d\theta-2\pi\lambda)-4\pi(\int^{2\pi}_{0}\nu^{1+\frac{1}{n}} d\theta-L\lambda)\big)L}{(2L^{2}-4\pi A)^{2}}\\
&\int^{2\pi}_{0}\nu^{1+\frac{1}{n}}d\theta+\frac{(1+\frac{1}{n})L}{2L^{2}-4\pi A}
\int^{2\pi}_{0}n\nu(\nu_{\theta\theta}+\nu-\lambda)d\theta\bigg\lvert\\
=&\bigg\lvert\frac{4\pi L\int^{2\pi}_{0}\nu^{1+\frac{1}{n}}d\theta-(2L^{2}+4\pi A)\int^{2\pi}_{0}\nu d\theta
+8\pi^{2}A\lambda}{(2L^{2}-4\pi A)^{2}}\int^{2\pi}_{0}\nu^{1+\frac{1}{n}}d\theta\\
&+\frac{(1+n)L}{2L^{2}-4\pi A}\int^{2\pi}_{0}(\nu^{2}-\nu^{2}_{\theta}-\lambda\nu)d\theta\bigg\lvert
\end{align*}
Combining \eqref{la}, \eqref{nuu}, \eqref{laj} and \eqref{ntj}, we can obtain \eqref{latj}. For $\lambda(t)$ given by \eqref{flow2}, the proof is similar.
\end{proof}

\begin{lemma}\label{latji}
Assume $n\ge1$ and $X(u, t):S^{1}\times[0,\infty)\rightarrow\mathbb{R}^{2}$ is a smooth convex solution of the flow \eqref{flow1} (or \eqref{flow2}). Then we have
\begin{equation}
\frac{dL}{dt}(t)\to 0 \quad as \quad t\to\infty,
\end{equation}
and
\begin{equation}
 \frac{dA}{dt}(t)\to 0 \quad as \quad t\to\infty.
\end{equation}
\end{lemma}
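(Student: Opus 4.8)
The plan is to exploit the fact that, by Lemmas \ref{llaa} and \ref{lajie}, $L(t)$ is nonincreasing and bounded below while $A(t)$ is nondecreasing and bounded above; hence both are monotone and bounded, so the finite limits $L_\infty=\lim_{t\to\infty}L(t)$ and $A_\infty=\lim_{t\to\infty}A(t)$ exist. In particular the improper integrals $\int_0^\infty \frac{dL}{dt}\,dt=L_\infty-L(0)$ and $\int_0^\infty \frac{dA}{dt}\,dt=A_\infty-A(0)$ converge, and since $\frac{dL}{dt}\le0$ and $\frac{dA}{dt}\ge0$ have fixed sign, this means $\frac{dL}{dt}\in L^1(0,\infty)$ and $\frac{dA}{dt}\in L^1(0,\infty)$ with $\int_0^\infty\bigl|\frac{dL}{dt}\bigr|\,dt=L(0)-L_\infty<\infty$ and $\int_0^\infty\bigl|\frac{dA}{dt}\bigr|\,dt=A_\infty-A(0)<\infty$. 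The strategy is then to upgrade this integrability to pointwise decay via a Barbalat-type argument, for which I first need that $\frac{dL}{dt}$ and $\frac{dA}{dt}$ are uniformly continuous on $[0,\infty)$.

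Uniform continuity I would obtain from a uniform bound on the second time-derivatives $L''(t)$ and $A''(t)$. Starting from the representation \eqref{lan}, I differentiate once more in $t$: for the length one gets $L''(t)=\frac{d}{dt}\int_0^{2\pi}\nu\,d\theta-2\pi\lambda'(t)=\int_0^{2\pi}\nu_t\,d\theta-2\pi\lambda'(t)$, and then I substitute the evolution equation \eqref{nt}, namely $\nu_t=n\nu^{p}(\nu_{\theta\theta}+\nu-\lambda)$. After integrating the $\nu^{p}\nu_{\theta\theta}$ term by parts on the closed curve $S^{1}$, every factor that appears is controlled time-independently: $\nu$ is bounded above and below by \eqref{nuu}--\eqref{nuji}, the derivatives $\nu_\theta,\nu_{\theta\theta}$ are bounded by Lemma \ref{ntjj}, $\lambda$ is bounded by \eqref{laj}, and $|\lambda'|\le c_{7}$ by \eqref{latj}. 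This yields $|L''(t)|\le C$ for a time-independent constant $C$. The identical scheme applied to $\frac{dA}{dt}=\int_0^{2\pi}\nu^{1+\frac{1}{n}}\,d\theta-L\lambda$, now using in addition the already-established bounds on $L$ and on $L'=\frac{dL}{dt}$, gives $|A''(t)|\le C'$. Consequently $\frac{dL}{dt}$ and $\frac{dA}{dt}$ are Lipschitz in $t$, hence uniformly continuous on $[0,\infty)$.

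Finally I would close the argument by contradiction in the manner of Barbalat's lemma. Suppose $\frac{dL}{dt}$ does not tend to $0$; then there exist $\varepsilon>0$ and a sequence $t_k\to\infty$ with $\bigl|\frac{dL}{dt}(t_k)\bigr|\ge\varepsilon$. By the uniform continuity just proved there is a fixed $\delta>0$, independent of $k$, so that $\bigl|\frac{dL}{dt}(t)\bigr|\ge\varepsilon/2$ for all $t\in[t_k,t_k+\delta]$; passing to a subsequence we may take these intervals pairwise disjoint. Then $\int_0^\infty\bigl|\frac{dL}{dt}\bigr|\,dt\ge\sum_k\frac{\varepsilon\delta}{2}=\infty$, contradicting $\frac{dL}{dt}\in L^1(0,\infty)$. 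Hence $\frac{dL}{dt}(t)\to0$, and the same argument applied to $\frac{dA}{dt}$ gives $\frac{dA}{dt}(t)\to0$. The only genuine work is the $C^2$-in-time estimate of the second paragraph; the monotone convergence and the Barbalat compactness step are soft once the uniform derivative bounds of Lemma \ref{ntjj} together with \eqref{laj} and \eqref{latj} are in hand, so I expect no serious obstacle beyond carefully bookkeeping the integration by parts.
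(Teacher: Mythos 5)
Your proposal is correct and follows essentially the same route as the paper: both establish integrability of $-L'$ and $A'$ from monotonicity and boundedness, then bound $|L''|$ and $|A''|$ uniformly by differentiating \eqref{lan}, substituting \eqref{nt}, integrating by parts, and invoking \eqref{la}, \eqref{nuu}, \eqref{laj}, \eqref{ntj} and \eqref{latj}. The only cosmetic difference is that you prove the final Barbalat-type step by the standard disjoint-intervals contradiction, whereas the paper simply quotes it as a known calculus fact.
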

\begin{proof}
The idea is to estimate $L^{''}(t)$ and $A^{''}(t)$. We will use the following useful result from calculus: Let $f(t)\ge0$ be a differential function on $[0,\infty)$ with $\int^{\infty}_{0}f(t)dt\le\infty$. If there exists a constant $C>0$ such that $\lvert f^{'}(t)\lvert\le C$ on $[0,\infty)$, then we must have $f(t)\to0$ as $t\to\infty$.

Now let $f_{1}(t)=-L^{'}(t), f_{2}(t)=A^{'}(t)$. By Lemma \ref{la} and \ref{lajie}, we know that $f_{1}(t)\ge0, f_{2}(t)\ge0$
on $[0,\infty)$ with
$$\int^{\infty}_{0}f_{1}(t)dt=L(0)-L(\infty)<\infty\quad and\quad \int^{\infty}_{0}f_{2}(t)dt=A(\infty)-A(0)<\infty.$$
By \eqref{lan}, we have
\begin{align*}
\big\lvert L^{''}(t)\big\lvert&=\bigg\lvert\int^{2\pi}_{0}\nu_{t} d\theta-2\pi\lambda^{'}(t)\bigg\lvert
=\bigg\lvert\int^{2\pi}_{0}n\nu^{p}(\nu_{\theta\theta}+\nu-\lambda(t)) d\theta-2\pi\lambda^{'}(t)\bigg\lvert\\
&=\bigg\lvert n\int^{2\pi}_{0}\big(\nu^{p+1}-p\nu^{p-1}\nu_{\theta}^{2}-\nu^{p}\lambda(t)\big) d\theta-2\pi\lambda^{'}(t)\bigg\lvert
\end{align*}
and
\begin{align*}
\big\lvert A^{''}(t)\big\lvert&=\bigg\lvert L^{'}(t)\lambda(t)+L(t)\lambda^{'}(t)-(1+\frac{1}{n})\int^{2\pi}_{0}\nu^{\frac{1}{n}}\nu_{t} d\theta\bigg\lvert\\
&=\bigg\lvert L^{'}(t)\lambda(t)+L(t)\lambda^{'}(t)-(n+1)\int^{2\pi}_{0}(\nu^{2}-\nu^{2}_{\theta}-\lambda(t)\nu)d\theta\bigg\lvert.
\end{align*}
By \eqref{la}, \eqref{nuu}, \eqref{laj} , \eqref{ntj} and\eqref{latj}, there exist two positive constants $c_{8}$ and $c_{9}$, independent of time, such that
\begin{equation}
\big\lvert L^{''}(t)\big\lvert\le c_{8}\quad and\quad\big\lvert A^{''}(t)\big\lvert\le c_{9}
\end{equation}
The above two estimates imply $\frac{dL}{dt}(t)\to 0$ and $\frac{dA}{dt}(t)\to 0$ as $t\to\infty.$ This finishes the proof.
\end{proof}

As a consequence of Lemma \ref{latji}, we have
\begin{lemma}\label{lain}
Assume $n\ge1$ and $X(u, t):S^{1}\times[0,\infty)\rightarrow\mathbb{R}^{2}$ is a smooth convex solution of the flow \eqref{flow1} (or \eqref{flow2}). Then we have
\begin{equation}
\lim\limits_{t\to\infty}\bigg\Vert\nu(\theta,t)-\bigg(\frac{L(\infty)}{2\pi}\bigg)^{n}\bigg\Vert_{C^{0}(S^{1})}=0.
\end{equation}
\end{lemma}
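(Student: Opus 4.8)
The plan is to reduce the uniform convergence of $\nu$ to a single constant to two facts already in hand: that the time derivatives $L'(t)$ and $A'(t)$ tend to $0$ (Lemma \ref{latji}), and that $\nu$ together with its first spatial derivative is uniformly bounded (the bounds \eqref{nuu}, \eqref{nuji} and Lemma \ref{ntjj}). Writing $\bar\nu(t)=\frac{1}{2\pi}\int_0^{2\pi}\nu(\theta,t)\,d\theta$ for the spatial average, I will first show that the oscillation of $\nu$ about $\bar\nu(t)$ decays to $0$ in $L^2$, then upgrade this to $C^0$ by interpolation, and finally identify $\lim_{t\to\infty}\bar\nu(t)=(L(\infty)/2\pi)^n$ (recall that $L(t)\to L(\infty)$ since $L$ is monotone and bounded by Lemmas \ref{llaa} and \ref{lajie}).

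First I would extract an asymptotic relation from \eqref{lan}. Since $L'(t)=\int_0^{2\pi}\nu\,d\theta-2\pi\lambda(t)=2\pi(\bar\nu(t)-\lambda(t))$ and $L'(t)\to0$, the nonlocal term satisfies $\lambda(t)-\bar\nu(t)\to0$. Substituting this into $A'(t)=\int_0^{2\pi}\nu^{1+1/n}\,d\theta-L(t)\lambda(t)$ and using $L(t)=\int_0^{2\pi}\nu^{1/n}\,d\theta$ together with the uniform bound on $L$, I obtain
\[
A'(t)=\int_0^{2\pi}\nu^{1/n}\big(\nu-\bar\nu(t)\big)\,d\theta+o(1).
\]
Because $A'(t)\to0$, this forces $\int_0^{2\pi}\nu^{1/n}(\nu-\bar\nu)\,d\theta\to0$.

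The heart of the argument is to convert the vanishing of this weighted integral into the vanishing of the genuine oscillation $\int_0^{2\pi}(\nu-\bar\nu)^2\,d\theta$. Since $\int_0^{2\pi}(\nu-\bar\nu)\,d\theta=0$, I may subtract the constant $\bar\nu^{1/n}$ and write $\int_0^{2\pi}\nu^{1/n}(\nu-\bar\nu)\,d\theta=\int_0^{2\pi}(\nu^{1/n}-\bar\nu^{1/n})(\nu-\bar\nu)\,d\theta$. Applying the mean value theorem to $g(x)=x^{1/n}$ pointwise gives $\nu^{1/n}-\bar\nu^{1/n}=g'(\xi)(\nu-\bar\nu)$ with $\xi$ between $\nu$ and $\bar\nu$, hence $\xi\in[c_5,c_6]$ by \eqref{nuji}. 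For $n\ge1$ the exponent $\tfrac1n-1\le0$, so $g'(\xi)=\tfrac1n\xi^{1/n-1}\ge\tfrac1n c_6^{1/n-1}=:\delta>0$, and therefore
\[
\int_0^{2\pi}\nu^{1/n}(\nu-\bar\nu)\,d\theta\ge\delta\int_0^{2\pi}(\nu-\bar\nu)^2\,d\theta\ge0.
\]
This coercivity step is the main obstacle: everything hinges on the strict monotonicity of $x\mapsto x^{1/n}$ combined with the two-sided bounds $c_5\le\nu\le c_6$ to keep $g'$ bounded below. It yields $\int_0^{2\pi}(\nu-\bar\nu)^2\,d\theta\to0$.

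Finally I would upgrade $L^2$ to $C^0$ and pin down the limit. Since $|\nu_\theta|\le D_1$ by Lemma \ref{ntjj}, the function $\nu(\cdot,t)-\bar\nu(t)$ is uniformly Lipschitz; a standard elementary estimate (if its sup is $m$ it stays $\ge m/2$ on an interval of length $\gtrsim m/D_1$, so $\int_0^{2\pi}(\nu-\bar\nu)^2\,d\theta\gtrsim m^3/D_1$) shows $\|\nu(\cdot,t)-\bar\nu(t)\|_{C^0}\to0$. Then $x\mapsto x^{1/n}$ being uniformly continuous on $[c_5,c_6]$ gives $L(t)-2\pi\bar\nu(t)^{1/n}=\int_0^{2\pi}(\nu^{1/n}-\bar\nu^{1/n})\,d\theta\to0$; as $L(t)\to L(\infty)$ this forces $\bar\nu(t)\to(L(\infty)/2\pi)^n$. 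The triangle inequality
\[
\Big\|\nu(\cdot,t)-\big(\tfrac{L(\infty)}{2\pi}\big)^n\Big\|_{C^0}\le\|\nu(\cdot,t)-\bar\nu(t)\|_{C^0}+\Big|\bar\nu(t)-\big(\tfrac{L(\infty)}{2\pi}\big)^n\Big|
\]
then completes the proof.
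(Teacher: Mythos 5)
Your proof is correct, and it takes a genuinely different route from the paper's. The paper argues by compactness: the uniform bounds on $\nu$ and $\nu_{\theta}$ give equicontinuity, Arzel\`a--Ascoli extracts from any sequence $t_{i}\to\infty$ a uniformly convergent subsequence $\nu(\cdot,t_{i})\to\nu_{\infty}$, and passing to the limit in $L'(t)\to0$ and $A'(t)\to0$ yields integral identities that first force $L^{2}(\infty)=4\pi A(\infty)$ and then, after rewriting with $L(\infty)=\int_{0}^{2\pi}\nu_{\infty}^{1/n}\,d\theta$, reduce to the vanishing of the symmetric double integral $\frac{1}{2}\int_{0}^{2\pi}\!\int_{0}^{2\pi}\big(\nu_{\infty}^{1/n}(x)-\nu_{\infty}^{1/n}(y)\big)\big(\nu_{\infty}(x)-\nu_{\infty}(y)\big)\,dx\,dy$, whose nonnegative integrand forces $\nu_{\infty}\equiv(L(\infty)/2\pi)^{n}$; uniqueness of subsequential limits then upgrades this to convergence of the full flow. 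You avoid subsequences altogether: from \eqref{lan} you read off $\lambda(t)-\bar\nu(t)\to0$ and $\int_{0}^{2\pi}\nu^{1/n}(\nu-\bar\nu)\,d\theta\to0$, and your mean-value-theorem coercivity $\int_{0}^{2\pi}(\nu^{1/n}-\bar\nu^{1/n})(\nu-\bar\nu)\,d\theta\ge\delta\int_{0}^{2\pi}(\nu-\bar\nu)^{2}\,d\theta$ with $\delta=\frac{1}{n}c_{6}^{1/n-1}>0$ (legitimate for $n\ge1$ by the two-sided bound \eqref{nuji}, which encodes the standing non-blow-up assumption) is exactly the quantitative, pointwise form of the same monotone-correlation (Chebyshev-type) fact that the paper exploits only in integrated, limiting form; your Lipschitz interpolation from $L^{2}$ to $C^{0}$ and the identification $\bar\nu(t)\to(L(\infty)/2\pi)^{n}$ via $L(t)=\int_{0}^{2\pi}\nu^{1/n}\,d\theta$ are both sound. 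Your route buys three things: it treats \eqref{flow1} and \eqref{flow2} simultaneously, since you use only \eqref{lan}, which holds verbatim for both flows (the paper computes the case \eqref{flow1} and declares \eqref{flow2} ``similar''); it dispenses with Arzel\`a--Ascoli and with re-deriving $L^{2}(\infty)=4\pi A(\infty)$ (already available from the exponential decay of the isoperimetric deficit); and it is quantitative in spirit, controlling the oscillation of $\nu$ directly by $|L'(t)|$ and $|A'(t)|$, so a convergence rate would follow from rates on those. What the paper's softer argument buys in exchange is that it needs no coercivity constant and makes the rigidity mechanism --- constancy of any subsequential limit --- especially transparent.
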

\begin{proof}
By \eqref{ntj} and \eqref{nuji}, for any sequence $t_{i}\to\infty$ the family of functions $\{\nu(\theta,t_{i})\}^{\infty}_{i=1}$ is equicontinuous on $S^{1}$. Hence there exists a subsequence, still denoted as $t_{i}$, such that $\nu(\theta,t_{i})$ converges uniformly to $\nu_{\infty}(\theta)$ on $S^{1}$, where $\nu_{\infty}(\theta)$ is some nonnegative continuous bounded function on $S^{1}$.

For $\lambda(t)$ given by \eqref{flow1}, we have
\begin{equation*}
\frac{dL}{dt}(t)=\int^{2\pi}_{0}\nu d\theta-\frac{\pi L(t)}{L^{2}(t)-2\pi A(t)}\int^{2\pi}_{0}\nu^{1+\frac{1}{n}} d\theta\to 0 \quad as \quad t\to\infty,
\end{equation*}
and
\begin{equation*}
 \frac{dA}{dt}(t)=\int^{2\pi}_{0}\nu^{1+\frac{1}{n}} d\theta-\frac{L^{2}(t)}{2L^{2}(t)-4\pi A(t)}\int^{2\pi}_{0}\nu^{1+\frac{1}{n}} d\theta\to 0 \quad as \quad t\to\infty.
\end{equation*}
Then we get
\begin{equation}\label{ll}
\int^{2\pi}_{0}\nu_{\infty}(\theta) d\theta=\frac{\pi L(\infty)}{L^{2}(\infty)-2\pi A(\infty)}\int^{2\pi}_{0}\nu^{1+\frac{1}{n}}_{\infty}(\theta) d\theta,
\end{equation}
and
\begin{equation}
\int^{2\pi}_{0}\nu^{1+\frac{1}{n}}_{\infty}(\theta) d\theta=\frac{L^{2}(\infty)}{2L^{2}(\infty)-4\pi A(\infty)}\int^{2\pi}_{0}\nu^{1+\frac{1}{n}}_{\infty}(\theta) d\theta.
\end{equation}
Hence $L^{2}(\infty)=4\pi A(\infty)$ and \eqref{ll} becomes
\begin{equation}\label{lll}
\int^{2\pi}_{0}\nu_{\infty}(\theta) d\theta=\frac{2\pi}{L(\infty)}\int^{2\pi}_{0}\nu^{1+\frac{1}{n}}_{\infty}(\theta) d\theta.
\end{equation}
By
$$L(\infty)=\lim\limits_{i\to\infty}L(t_{i})=\lim\limits_{i\to\infty}\int^{2\pi}_{0}
\nu^{\frac{1}{n}}(\theta,t_{i}) d\theta=\int^{2\pi}_{0}\nu^{\frac{1}{n}}_{\infty}(\theta)d\theta$$
we can rewrite \eqref{lll} as
\begin{equation*}
\int^{2\pi}_{0}\nu^{\frac{1}{n}}_{\infty}(\theta)d\theta\int^{2\pi}_{0}\nu_{\infty}(\theta) d\theta=
\int^{2\pi}_{0} d\theta\int^{2\pi}_{0}\nu^{1+\frac{1}{n}}_{\infty}(\theta) d\theta,
\end{equation*}
i.e., the following iterated integral vanishes
\begin{equation}\label{ab}
\frac{1}{2}\int^{2\pi}_{0}\int^{2\pi}_{0}\bigg[\big(\nu^{\frac{1}{n}}_{\infty}(x)-\nu^{\frac{1}{n}}_{\infty}(y)\big)
(\nu_{\infty}(x)-\nu_{\infty}(y))\bigg]dxdy=0.
\end{equation}
As the integrand in \eqref{ab} is nonnegative everywhere on $S^{1}\times S^{1}$, it must be equal to zero everywhere, which implies $\nu_{\infty}(x)=\nu_{\infty}(y)$ for all $x,y\in S^{1}$ and so $\nu_{\infty}(\theta)$
 must be a constant function on $S^{1}$ with $\nu_{\infty}(\theta)=(L(\infty)/2\pi)^{n}$. Because every sequence has a subsequence converging uniformly to the same limit, we must have $\nu(\theta,t)\to(L(\infty)/2\pi)^{n}$ as $t\to\infty$. For $\lambda(t)$ given by \eqref{flow2}, the proof is similar.
\end{proof}

As a consequence of the Arzela-Ascoli theorem, Lemma \ref{ntjj} and Lemma \ref{lain}, we immediately have the following smooth convergence.
\begin{lemma}\label{nuin} ($C^{\infty}$ convergence of $\nu$)
Assume $n\ge1$ and $X(u, t):S^{1}\times[0,\infty)\rightarrow\mathbb{R}^{2}$ is a smooth convex solution of the flow \eqref{flow1}(or \eqref{flow2}). Then we have
\begin{equation}
\lim\limits_{t\to\infty}\bigg\Vert\nu(\theta,t)-\bigg(\frac{L(\infty)}{2\pi}\bigg)^{n}\bigg\Vert_{C^{m}(S^{1})}=0,
\quad \forall m=0,1,2,3\cdots
\end{equation}
\end{lemma}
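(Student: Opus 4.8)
The plan is to bootstrap the $C^{0}$ convergence of Lemma \ref{lain} up to $C^{m}$ by combining it with the uniform spatial derivative bounds of Lemma \ref{ntjj}, through a standard compactness-and-uniqueness argument. Write $c_{\infty}:=(L(\infty)/2\pi)^{n}$ and set $w(\theta,t):=\nu(\theta,t)-c_{\infty}$. Since $c_{\infty}$ is a constant, one has $\partial^{i}_{\theta}w=\partial^{i}_{\theta}\nu$ for every $i\ge1$, so the bounds of Lemma \ref{ntjj} transfer verbatim to $w$, giving $\|\partial^{i}_{\theta}w(\cdot,t)\|_{C^{0}(S^{1})}\le D_{i}$ uniformly in $t$, while Lemma \ref{lain} gives $\|w(\cdot,t)\|_{C^{0}(S^{1})}\to0$ as $t\to\infty$.

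First I would argue by contradiction. Suppose the conclusion fails for some fixed $m\ge1$; then there exist $\epsilon_{0}>0$ and a sequence $t_{j}\to\infty$ with $\|w(\cdot,t_{j})\|_{C^{m}(S^{1})}\ge\epsilon_{0}$ for all $j$. Because $\partial^{i}_{\theta}\nu$ is uniformly bounded for $0\le i\le m+1$, the family $\{\partial^{i}_{\theta}\nu(\cdot,t_{j})\}_{j}$ is, for each $i\le m$, uniformly bounded and equicontinuous, the bound on $\partial^{i+1}_{\theta}\nu$ supplying the equicontinuity. By the Arzela-Ascoli theorem and a diagonal extraction over $i=0,1,\dots,m$, I can pass to a subsequence, still denoted $t_{j}$, along which $\nu(\cdot,t_{j})$ converges in $C^{m}(S^{1})$ to some limit $g$. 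Invoking uniqueness of limits, since $C^{m}$ convergence forces $C^{0}$ convergence to the same limit, I conclude $g=\lim_{j}\nu(\cdot,t_{j})=c_{\infty}$ by Lemma \ref{lain}, hence $g\equiv c_{\infty}$. But then $\|\nu(\cdot,t_{j})-c_{\infty}\|_{C^{m}(S^{1})}=\|\nu(\cdot,t_{j})-g\|_{C^{m}(S^{1})}\to0$, contradicting $\|w(\cdot,t_{j})\|_{C^{m}}\ge\epsilon_{0}$. This establishes $\|w(\cdot,t)\|_{C^{m}(S^{1})}\to0$ for every $m$.

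A shorter alternative that avoids the subsequence bookkeeping is to invoke a Landau-Kolmogorov (Gagliardo-Nirenberg) interpolation inequality on the circle: for $1\le i\le m$,
\[
\|\partial^{i}_{\theta}w(\cdot,t)\|_{C^{0}(S^{1})}\le C\,\|w(\cdot,t)\|_{C^{0}(S^{1})}^{1-i/m}\,\|\partial^{m}_{\theta}w(\cdot,t)\|_{C^{0}(S^{1})}^{i/m}.
\]
Here the last factor is bounded by $D_{m}^{i/m}$ through Lemma \ref{ntjj}, while $\|w(\cdot,t)\|_{C^{0}(S^{1})}\to0$ through Lemma \ref{lain}; hence every intermediate derivative tends to zero uniformly, which is exactly $C^{m}$ convergence.

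I do not expect any genuine obstacle here, as the statement is a routine upgrade of the established estimates. The only point deserving a moment's care, in either route, is the observation that the target $c_{\infty}$ is a constant, so that its derivatives vanish and the uniform derivative bounds of Lemma \ref{ntjj} apply directly to the difference $w$; once this is noted, the remainder is standard compactness (or standard interpolation).
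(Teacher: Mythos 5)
Your proposal is correct and follows essentially the same route as the paper, which disposes of this lemma in one line as ``a consequence of the Arzela--Ascoli theorem, Lemma \ref{ntjj} and Lemma \ref{lain}''---your compactness-plus-uniqueness-of-limits argument is exactly the standard reasoning behind that remark, spelled out in full. Your alternative via the Landau--Kolmogorov interpolation inequality is also valid (periodic functions extend to bounded functions on $\mathbb{R}$, so the multiplicative form applies), but it is an optional shortcut rather than a different idea.
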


This Lemma says that the evolving curve $X(\cdot,t)$ converges to a circle smoothly in the sense that its radius of curvature (or curvature) converges smoothly to a constant. Conceivably the evolving curve $X(\cdot,t)$ may escape to infinity or oscillate indefinitely. To show that this will not happen, we will prove that $X(\cdot,t)$ has limit as $t\to\infty$.
\begin{remark}
Lemma \ref{nuin} tells us the the limiting curve $X_{\infty}(\cdot)$ is a fixed circle with radius $L(\infty)/2\pi$. The center $(a,b)\in\mathbb{R}^{2}$ of  $X_{\infty}(\cdot)$ is given by
\begin{equation}
(a,b)=\lim\limits_{t\to\infty}\frac{1}{2\pi}\int^{2\pi}_{0}X(\theta,t)d\theta=
\lim\limits_{t\to\infty}\frac{1}{\pi}\int^{2\pi}_{0}s(\theta,t)(\cos\theta,\sin\theta)d\theta,
\end{equation}
where $s(\theta,t)$ is the support function of the curve $X(\cdot,t)$.
\end{remark}

\begin{lemma}\label{xyz}
Assume $n\ge1$ and $X(u, t):S^{1}\times[0,\infty)\rightarrow\mathbb{R}^{2}$ is a smooth convex solution of the flow \eqref{flow1}(or \eqref{flow2}. Then there exist positive constants $c_{10},c_{11}$, both are independent of time, so that
\begin{equation}
\bigg\lvert\frac{\partial X}{\partial t}(u,t)\bigg\lvert\le c_{10}e^{-c_{11}t},\quad \forall(u,t)\in S^{1}\times[0.\infty).
\end{equation}
In particular, we have
\begin{equation}
\lim\limits_{t\to\infty}X(u,t)=X_{0}(u)+\int^{\infty}_{0}X_{t}(u,t)dt,\quad\forall u\in S^{1},
\end{equation}
where, for each $u\in S^{1}$, the integral $\int^{\infty}_{0}X_{t}(u,t)dt$ converges and $X(u,t)$ has a limit
$X_{\infty}(u)$ as $t\to\infty$.
\end{lemma}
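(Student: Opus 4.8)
The starting point is the pointwise identity for the speed. Since the normal velocity of \eqref{flow1} (or \eqref{flow2}) is exactly $\lambda(t)-\nu(\theta,t)$ (by the definition of the nonlocal term $\lambda$ and $\nu=\kappa^{-n}$), we have
\[
\left|\frac{\partial X}{\partial t}(u,t)\right|=\bigl|\lambda(t)-\nu(\theta,t)\bigr|,\qquad \theta=\theta(u,t).
\]
Thus the whole statement reduces to proving that this scalar speed decays exponentially in the $C^{0}(S^{1})$ norm, after which the ``in particular'' part is routine. The plan is to split $\lambda-\nu=(\bar\nu-\nu)+(\lambda-\bar\nu)$, where $\bar\nu(t)=\tfrac{1}{2\pi}\int_{0}^{2\pi}\nu\,d\theta$, and to show that each piece decays exponentially: the first measures the shape oscillation and the second the nonlocal average.

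For the oscillation term $\bar\nu-\nu$ I would convert the \emph{nonlocal} exponential decay of $L^{2}(t)-4\pi A(t)$ (established above) into a \emph{pointwise} decay of $\nu$. Because $n\ge1$ makes $\nu=\rho^{n}$ awkward, I would pass to the radius of curvature $\rho=\nu^{1/n}$, which by \eqref{nuji} is bounded above and below and, being a smooth function of $\nu$, inherits uniformly bounded $\theta$-derivatives from Lemma \ref{ntjj}. The closure condition for a closed convex curve, $\int_{0}^{2\pi}\rho\cos\theta\,d\theta=\int_{0}^{2\pi}\rho\sin\theta\,d\theta=0$, kills the first Fourier mode of $\rho$, so writing $\rho=\bar\rho+\sum_{k\ge2}(\hat a_{k}\cos k\theta+\hat b_{k}\sin k\theta)$ and setting $d_{k}=\hat a_{k}^{2}+\hat b_{k}^{2}$ one has the purely geometric identities
\[
L^{2}-4\pi A=2\pi^{2}\sum_{k\ge2}\frac{d_{k}}{k^{2}-1},\qquad \|\rho-\bar\rho\|_{L^{2}}^{2}=\pi\sum_{k\ge2}d_{k}.
\]
A Cauchy--Schwarz interpolation of $\sum d_{k}$ between $\sum\frac{d_{k}}{k^{2}-1}$ (which is $\propto L^{2}-4\pi A$, exponentially small) and $\sum(k^{2}-1)d_{k}\le\|\rho_{\theta}\|_{L^{2}}^{2}/\pi$ (bounded, by Lemma \ref{ntjj}) yields $\|\rho-\bar\rho\|_{L^{2}}^{2}\le C\sqrt{L^{2}-4\pi A}$; a one-dimensional Gagliardo--Nirenberg inequality $\|f\|_{C^{0}}\le C\|f\|_{L^{2}}^{1/2}\|f\|_{H^{1}}^{1/2}$ then upgrades this to exponential decay of $\|\rho-\bar\rho\|_{C^{0}}$, and hence (since $\nu=\rho^{n}$ with $\rho$ bounded) of $\|\nu-\bar\nu\|_{C^{0}}$.

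For the nonlocal average $\lambda-\bar\nu$ I would expand the definition of $\lambda$ in \eqref{flow1} (or \eqref{flow2}). Writing $\nu=\bar\nu(1+\psi)$ with $\int\psi\,d\theta=0$ and $\|\psi\|_{C^{0}}$ exponentially small, and using $L^{2}-2\pi A=\tfrac12 L^{2}+\tfrac12(L^{2}-4\pi A)$, the linear-in-$\psi$ contributions cancel and one is left with $|\lambda-\bar\nu|\le C\,(L^{2}-4\pi A)+C\,\|\nu-\bar\nu\|_{L^{2}}^{2}$, both terms already known to be exponentially small. Combining the two pieces gives $|\partial_t X|=|\lambda-\nu|\le c_{10}e^{-c_{11}t}$. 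The remaining assertions are then immediate: $\int_{0}^{\infty}|\partial_t X(u,t)|\,dt\le c_{10}/c_{11}<\infty$ for every $u$, so by the fundamental theorem of calculus $X(u,t)=X_{0}(u)+\int_{0}^{t}X_{t}(u,s)\,ds$ converges uniformly in $u$ to the limit $X_{\infty}(u)=X_{0}(u)+\int_{0}^{\infty}X_{t}(u,s)\,ds$ as $t\to\infty$.

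I expect the main obstacle to lie in the second paragraph, i.e.\ in passing from the single global quantity $L^{2}-4\pi A$ to a pointwise bound on $\nu$. The crucial feature is the spectral gap $k\ge2$, which is precisely what makes $L^{2}-4\pi A$ comparable to the shape deviation; it originates from the vanishing first Fourier mode of $\rho$ (the closure condition), and one must be careful that for general $n$ it is $\rho=\nu^{1/n}$, not $\nu$ itself, whose first mode vanishes. Keeping the interpolation quantitative, so that a (possibly reduced) exponential rate survives, is the technical heart of the argument.
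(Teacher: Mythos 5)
Your proposal is correct, but it takes a genuinely different route from the paper. The paper proves the exponential decay of the speed \emph{dynamically}: it differentiates the energy $\int_{0}^{2\pi}(\nu_{\theta})^{2}d\theta$ along \eqref{nt}, integrates by parts, and uses the $C^{\infty}$ convergence of $\nu$ to the constant $(L(\infty)/2\pi)^{n}$ (Lemma \ref{nuin}) together with the Wirtinger inequality $\int_{0}^{2\pi}(\rho_{\theta\theta})^{2}d\theta\ge 4\int_{0}^{2\pi}(\rho_{\theta})^{2}d\theta$ --- the very same $k\ge2$ spectral gap you invoke, coming from the vanishing first Fourier mode of $\rho$ --- to close a differential inequality $\frac{d}{dt}\int(\nu_{\theta})^{2}d\theta\le -c_{12}\int(\nu_{\theta})^{2}d\theta$ for large $t$; a Sobolev/Poincar\'e step then converts $\|\nu_{\theta}\|_{L^{2}}$-decay into $C^{0}$-decay of the speed, as in \eqref{14}. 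You instead proceed \emph{statically}, taking the already-proved exponential decay of $L^{2}-4\pi A$ as the driving quantity and converting it into pointwise decay via the Hurwitz--Fourier identity $L^{2}-4\pi A=2\pi^{2}\sum_{k\ge2}d_{k}/(k^{2}-1)$, Cauchy--Schwarz interpolation against $\sum(k^{2}-1)d_{k}\le\pi^{-1}\|\rho_{\theta}\|_{L^{2}}^{2}$ (bounded by Lemma \ref{ntjj} and \eqref{nuji}), and an Agmon-type inequality. Your cancellation claim for $\lambda-\bar\nu$ does check out: writing $\sigma=\rho-\bar\rho$ (zero mean, no first modes), one has exactly $L=2\pi\bar\rho$ and $L^{2}-2\pi A=2\pi^{2}\bar\rho^{2}+\tfrac12(L^{2}-4\pi A)$, while $\int\rho^{n+1}d\theta=2\pi\bar\rho^{\,n+1}+O(\|\sigma\|_{L^{2}}^{2})$ and $\int\rho^{n}d\theta=2\pi\bar\rho^{\,n}+O(\|\sigma\|_{L^{2}}^{2})$ since the linear terms integrate to zero; hence $\lambda=\bar\rho^{\,n}+O(\|\sigma\|_{L^{2}}^{2}+(L^{2}-4\pi A))=\bar\nu+O(\cdot)$ for both \eqref{flow1} and \eqref{flow2}, exactly your asserted bound. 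This step, which you rightly flag as the technical heart, is if anything cleaner than the paper's corresponding passage in \eqref{14}, where $\kappa^{-1}$ inside the nonlocal integral is replaced by a constant bound and the Poincar\'e inequality is applied with a center $\beta(t)$ that is not the mean of $\nu$. The trade-off: the paper's energy method gives a rate tied to the linearized operator and needs no flow-specific computation for $\lambda$, whereas your argument recycles the monotone isoperimetric deficit and avoids a new parabolic estimate, at the cost of a reduced exponential rate (halved by the interpolation, halved again by Agmon) and a flow-by-flow verification of the cancellation; the concluding ``in particular'' assertion is handled identically in both.
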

\begin{proof}
By \eqref{nt}, we have
\begin{align}\label{1}
\frac{d}{dt}\int^{2\pi}_{0}\frac{1}{2}(\nu_{\theta})^{2}d\theta&=\int^{2\pi}_{0}\nu_{\theta}\nu_{\theta t}d\theta\notag\\
&=\int^{2\pi}_{0}n\nu^{p}\nu_{\theta}\nu_{\theta\theta\theta}d\theta+\int^{2\pi}_{0}np\nu^{p-1}\nu_{\theta}^{2}
\nu_{\theta\theta}d\theta+\int^{2\pi}_{0}n\nu^{p-1}\big(p(\nu-\lambda)+\nu\big)\nu_{\theta}^{2}d\theta\notag\\
&=-\int^{2\pi}_{0}(n\nu^{p}\nu_{\theta})_{\theta}\nu_{\theta\theta}d\theta+\int^{2\pi}_{0}np\nu^{p-1}\nu_{\theta}^{2}
\nu_{\theta\theta}d\theta+\int^{2\pi}_{0}n\nu^{p-1}(p(\nu-\lambda)+\nu)\nu_{\theta}^{2}d\theta\notag\\
&=-\int^{2\pi}_{0}n\nu^{p}(\nu_{\theta\theta})^{2}d\theta+\int^{2\pi}_{0}n\nu^{p-1}(p(\nu-\lambda)+\nu)\nu_{\theta}^{2}d\theta\notag\\
&=-n\int^{2\pi}_{0}\nu^{1-\frac{1}{n}}(\nu_{\theta\theta})^{2}d\theta+n\int^{2\pi}_{0}\nu^{1-\frac{1}{n}}
(\nu_{\theta})^{2}d\theta+(n-1)\int^{2\pi}_{0}(\nu-\lambda)\nu^{-\frac{1}{n}}(\nu_{\theta})^{2}d\theta.
\end{align}
Since $\nu=\rho^{n}$, we have
\begin{align}
\int^{2\pi}_{0}(\nu_{\theta\theta})^{2}d\theta&=\int^{2\pi}_{0}\bigg(
n\rho^{n-1}\rho_{\theta\theta}+n(n-1)\rho^{n-2}(\rho_{\theta})^{2}\bigg)^{2}d\theta\notag\\
&=n^{2}\int^{2\pi}_{0}\rho^{2n-2}(\rho_{\theta\theta})^{2}+\int^{2\pi}_{0}
\bigg(2n^{2}(n-1)\rho^{2n-3}\rho_{\theta\theta}+n^{2}(n-1)^{2}\rho^{2n-4}(\rho_{\theta})^{2}\bigg)(\rho_{\theta})^{2}d\theta.\notag
\end{align}
By Lemma \ref{nuin} and Wirtinger inequality, for any small $\epsilon>0$, if $t$ is large enough, we can obtain
\begin{align}\label{11}
\int^{2\pi}_{0}(\nu_{\theta\theta})^{2}d\theta&\ge n^{2}\bigg(\big(\frac{L(\infty)}{2\pi}\big)^{2n-2}-\epsilon\bigg)
\int^{2\pi}_{0}(\rho_{\theta\theta})^{2}d\theta-\epsilon\int^{2\pi}_{0}(\rho_{\theta})^{2}d\theta\notag\\
&\ge4n^{2}\bigg(\big(\frac{L(\infty)}{2\pi}\big)^{2n-2}-\epsilon\bigg)
\int^{2\pi}_{0}(\rho_{\theta})^{2}d\theta-\epsilon\int^{2\pi}_{0}(\rho_{\theta})^{2}d\theta\notag\\
&=\bigg[4n^{2}\bigg(\big(\frac{L(\infty)}{2\pi}\big)^{2n-2}-\epsilon\bigg)-\epsilon\bigg]\int^{2\pi}_{0}(\rho_{\theta})^{2}d\theta.
\end{align}
On the other hand, we also have for large $t$ the estimate
\begin{equation}\label{12}
\int^{2\pi}_{0}(\nu_{\theta})^{2}d\theta=\int^{2\pi}_{0}(n\rho^{n-1}\rho_{\theta})^{2}d\theta=
n^{2}\int^{2\pi}_{0}\rho^{2n-2}(\rho_{\theta})^{2}d\theta\le n^{2}\bigg(\big(\frac{L(\infty)}{2\pi}\big)^{2n-2}+\epsilon\bigg)\int^{2\pi}_{0}(\rho_{\theta})^{2}d\theta.
\end{equation}
By \eqref{11} and \eqref{12}, one obtains
\begin{equation}\label{13}
\int^{2\pi}_{0}(\nu_{\theta\theta})^{2}d\theta\ge\frac{4n^{2}\bigg(\big(\frac{L(\infty)}{2\pi}\big)^{2n-2}-\epsilon\bigg)
-\epsilon}{n^{2}\bigg(\big(\frac{L(\infty)}{2\pi}\big)^{2n-2}+\epsilon\bigg)}\int^{2\pi}_{0}(\nu_{\theta})^{2}d\theta
\ge(4-\sigma)\int^{2\pi}_{0}(\nu_{\theta})^{2}d\theta
\end{equation}
for some number $\sigma>0$ satisfying $\sigma\to0$ as $\epsilon\to0$. Therefore we obtain for $t$ large
\begin{align}
n\int^{2\pi}_{0}\nu^{1-\frac{1}{n}}(\nu_{\theta\theta})^{2}d\theta&\ge n
\bigg(\big(\frac{L(\infty)}{2\pi}\big)^{1-\frac{1}{n}}-\epsilon\bigg)\int^{2\pi}_{0}(\nu_{\theta\theta})^{2}d\theta\notag\\
&\ge n\bigg(\big(\frac{L(\infty)}{2\pi}\big)^{1-\frac{1}{n}}-\epsilon\bigg)(4-\sigma)\int^{2\pi}_{0}(\nu_{\theta})^{2}d\theta.
\end{align}
By combining \eqref{1}, we conclude for large $t$ the estimate
\begin{align}
\frac{d}{dt}\int^{2\pi}_{0}\frac{1}{2}(\nu_{\theta})^{2}d\theta\le&
-n\bigg(\big(\frac{L(\infty)}{2\pi}\big)^{1-\frac{1}{n}}-\epsilon\bigg)(4-\sigma)\int^{2\pi}_{0}(\nu_{\theta})^{2}d\theta\notag\\
&+n\bigg(\big(\frac{L(\infty)}{2\pi}\big)^{1-\frac{1}{n}}+\epsilon\bigg)\int^{2\pi}_{0}(\nu_{\theta})^{2}d\theta
+\epsilon\int^{2\pi}_{0}(\nu_{\theta})^{2}d\theta\notag\\
&\le-c_{12}\int^{2\pi}_{0}(\nu_{\theta})^{2}d\theta,
\end{align}
for some constant $c_{12}>0$, which is independent of $t,\epsilon,\sigma$ as long as $t$ is large enough and  $\epsilon,\sigma$ are small enough. Thus we have
\begin{equation}
\int^{2\pi}_{0}(\nu_{\theta})^{2}d\theta\le c_{13}e^{-2c_{12}t},\quad\forall t\in[0,\infty),
\end{equation}
where $c_{13}$ is positive constant depending only on the initial data $X_{0}$. By combining Sobolev's inequality, we can estimate the speed of the flow \eqref{flow1}
\begin{align}\label{14}
\Vert X_{t}(u,t)\Vert_{C^{0}(S^{1})}&=\bigg\Vert\nu-\frac{L}{2L^{2}-4\pi A}\int^{2\pi}_{0}\nu^{1+\frac{1}{n}}d\theta\bigg\Vert_{C^{0}(S^{1})}\notag\\
&=\bigg\Vert\nu-\frac{L}{2L^{2}-4\pi A}\int^{2\pi}_{0}\nu\kappa^{-1}d\theta\bigg\Vert_{C^{0}(S^{1})}\notag\\
&\le\bigg\Vert\nu-\frac{c_{14}L}{2L^{2}-4\pi A}\int^{2\pi}_{0}\nu d\theta\bigg\Vert_{C^{0}(S^{1})}\notag\\
&\le\sqrt{2\pi}\bigg(\int^{2\pi}_{0}(\nu-\beta(t))^{2}d\theta\bigg)^{\frac{1}{2}}
+\frac{1}{\sqrt{2\pi}}\bigg(\int^{2\pi}_{0}(\nu_{\theta})^{2}d\theta\bigg)^{\frac{1}{2}}\notag\\
&\le(\sqrt{2\pi}+\frac{1}{\sqrt{2\pi}})\bigg(\int^{2\pi}_{0}(\nu_{\theta})^{2}d\theta\bigg)^{\frac{1}{2}}\notag\\
&\le(\sqrt{2\pi}+\frac{1}{\sqrt{2\pi}})\sqrt{c_{13}}e^{-c_{12}t},
\end{align}
where $c_{14}$ is the upper bound of $\kappa$ and $\beta(t)=\frac{c_{14}L}{2L^{2}-4\pi A}\int^{2\pi}_{0}\nu d\theta$. \eqref{14} tells us that the speed of the flow \eqref{flow1} decays exponentially. The evolving curve $X(\cdot,t)$ has a limit $X_{\infty}(\cdot)$ as $t\to\infty$ due to the convergence of the integral
\begin{equation}
\lim\limits_{t\to\infty}X(u,t)=X_{0}(u)+\int^{\infty}_{0}X_{t}(u,t)dt,\quad\forall u\in S^{1}.
\end{equation}
For $\lambda(t)$ given by \eqref{flow2}, the proof is similar.
\end{proof}

The proof of Theorem \ref{them} is now complete due to Lemma \ref{llaa}, Lemma \ref{lok}, Lemma \ref{longtime}, Lemma \ref{nuin} and Lemma \ref{xyz}.

\textbf{Acknowledgements}
This work was partially supported by Science and Technology Commission
of Shanghai Municipality (No. 22DZ2229014). The research is supported by Shanghai Key Laboratory of PMMP.

\textbf{Data availability}
Data sharing  not applicable to this article as no datasets were generated or analysed during the current study.

\textbf{Conflict of interest}
The author has no conflicts of interest to declare that are relevant to the content of this article.



\begin{thebibliography}{99}
\bibitem{Andrews-JRAM}B. Andrews.: The affine curve-lengthening flow. J. Reine Angrew. Math, 506(1999),43-83.

\bibitem{Bonnesen-Fenchel}T. Bonnesen., W. Fenchel.: Theorie der Convexen Korper. Chelsea Publishing, New York, 1948.

\bibitem{KAB}K. A. Brakke.: The Motion of a surface by Its Mean Curvature, Math. Notes(Princeton Univ. Press, Princeton, NJ, 1978)

\bibitem{Chou-Zhu}K. S. Chou., X. P. Zhu.: The Curve Shortening Problem. hapman and Hall/CRC,2001.

\bibitem{Chow-Tsai}B. Chow., D.H. Tsai.: Expanding of conves plane curves. J. Diff. Geom, 44(1996),312-330.

\bibitem{Friedman}A. Friedman.: Partial Differential Equations of Parabolic Type, Prentice-Hall, Inc.1964.

\bibitem{Gage990}M. E. Gage.: Curve shortening on surfaces. Ann.Scient. Ec. Norm. Sup,23(1990),229-256.

\bibitem{Gage-Hamilton}M. E. Gage., R. S.Hamilton.: The heat equation shrinking convex plane curves. J. Diff. Geom, 23 (1986), 69-96.


\bibitem{Gage83}M. E. Gage.: An isoperimetric inequality with applications to curve shortening. Duke Mathematical Journal,  50 (1983), 1225-1229.

\bibitem{Gage84}M. E. Gage.: Curve shortening makes convex curves circular. Inventiones Mathematicae, 76(1984), 357-364.

\bibitem{Gage85}M. E. Gage.: On an area-preserving evolution equation for plane curves. Nonlinear Problems in Ceometry(Mobile, Ala, 1985), volume 51 of Contemp Math, pages 51-62. Providence, RI: Amer Math Soc,1986.

\bibitem{Grayson87}M. Grayson.: The heat equation shrinks embeded plane to round points. J. Diff. Geom, 23(1987),285-314.

\bibitem{gpt1}L. Y. Gao., S.L. Pan, D.H. Tsai.: On a length-preserving inverse curvature flow of convex closed plane curves. J. Differ. Equ, 269 (2020),5802-5831.

\bibitem{gpt2} L. Y. Gao., S.L. Pan, D.H. Tsai.: On an area-preserving inverse curvature flow of convex closed plane curves. J. Funct. Anal, 280 (2021),108931.

\bibitem{GZ}L. Y. Gao., Y.T. Zhang.: On Yau's problem of evolving one curve to another: convex case. J. Differ. Equ, 266(2019),179-201.

\bibitem{Gao-pan-yang}L. L. Gao, S. L. Pan, Y. L. Yang.: Some notes on Green-Osher's inequality. J.Math.Inequal,2(2015),369-380.

\bibitem{Grayson}M. Grayson.: Shortening embeded curves. Ann. of Math,129(1989),71-111.

\bibitem{Green-Osher}M. Green., S. Osher.: Steiner polynomials, Wulff flows, and some new isoperimetric inequalities for convex plane curves. Asian J. Math,3(1999),659-676.

\bibitem{Jiang-Pan}L. Jiang., S. L. Pan.: On a non-local curve evolution problem in the plane. Comm. Anal. Geom, 16(2008),1-26.

\bibitem{Lady-Solo}O. A. Ladyzhenskaya., V. A. Solonnikov., N. N. Ural'tseva.: Linear and Quasilinear Equations of Parabolic Type. Amer. Math. Soc. providence R. I.1968.

\bibitem{LT}Y.C. Lin., D. H. Tsai.: On a simple maximum principle technique applied to equations on the circle,
J. Differ. Equ, 245 (2008), 377-391.

\bibitem{Lin-Tsai2012} Y. C. Lin., , D. H.Tsai.: Application of Andrew and Green-Osher inequalities to nonlocal flow convex plane curve. J. Evol. Equ. 12(2012),833-854.

\bibitem{Ma-Cheng}L. Ma., L. Cheng.: A non-local area preserving curve flow. Geom. Dedicata, 171(2014), 231-247.

\bibitem{Ma-zhu}L. Ma.,  A. Q. Zhu.: On a length preserving curve flow. J. Monatsh. Math,165(2012),57-78.

\bibitem{Mao-Pan-Wang}Y. Mao., S.L. Pan., Y. Wang.: An area-preserving flow for convex closed plane curves. Int. J. Math. 31 (2013), (1350029).

\bibitem{Pan-Yang}S. L. Pan., J. N. Yang.: On a non-local perimeter-preserving curve evolution problem for convex plane curves, Manuscripta Math, 127(2008),469-284.

\bibitem{Osserman-Bonnesn}R. Osserman.: Bonnesen-style isoperimetric inequalities, Amer. Math. Monthly, 86 (1979), 1-29.

\bibitem{Sun}Z. Z. Sun.: On a non-local area-preserving curvature flow in the plane. Abh. Math. Semin. Univ. Hambg,  91(2021), 345-352.

\bibitem{Tsai-Wang}D. H. Tsai., X. L. Wang.: On length-preserving and area-preserving nonlocal flow of convex closed plane curves. alc. Var,  54(2015), 3603-3622.

\bibitem{w}X. L. Wang.: The evolution of area-preserving and length-preserving inverse curvature flows for
    immersed locally convex closed plane curves. J. Funct. Anal, 284 (2023), 109744.

\end{thebibliography}
\end{document}